\newtheorem{theorem}{Theorem}[section]
\newtheorem{lemma}[theorem]{Lemma}
\newtheorem{proposition}[theorem]{Proposition}
\newtheorem{remark}[theorem]{Remark}
\theoremstyle{definition}
\theoremstyle{remark}
\newtheorem*{note*}{Note}
\numberwithin{equation}{section}
\newcommand{\rank}{\mathop{\operator@font rank}}
\newcommand{\conv}{\mathop{\operator@font conv}}
\newcommand{\vol}{\mathop{\operator@font vol}}
\newcommand{\onetagright}{\tagsleft@false}
\newcommand{\ls}{\leqslant}
\newcommand{\gr}{\geqslant}
\renewcommand{\epsilon}{\varepsilon}
\newcommand{\prend}{$\quad \hfill \Box$}
\begin{document}
\small

\title{\bf Estimates for measures of lower dimensional sections of convex bodies}

\medskip

\author{Giorgos Chasapis, Apostolos Giannopoulos and Dimitris-Marios Liakopoulos}

\date{}

\maketitle

\begin{abstract}
\footnotesize We present an alternative approach to some results of Koldobsky on measures of sections of symmetric
convex bodies, which allows us to extend them to the not necessarily symmetric setting. We prove that if $K$ is a convex body in ${\mathbb R}^n$ with
$0\in {\rm int}(K)$ and if $\mu $ is a measure on ${\mathbb R}^n$ with a locally integrable
non-negative density $g$ on ${\mathbb R}^n$, then
\begin{equation*}\mu (K)\ls \left (c\sqrt{n-k}\right )^k\max_{F\in G_{n,n-k}}\mu (K\cap F)\cdot |K|^{\frac{k}{n}}\end{equation*}
for every $1\ls k\ls n-1$. Also, if $\mu $ is even and log-concave, and if $K$ is a symmetric convex body in ${\mathbb R}^n$
and $D$ is a compact subset of ${\mathbb R}^n$ such that
$\mu (K\cap F)\ls \mu (D\cap F)$ for all $F\in G_{n,n-k}$, then
\begin{equation*}\mu (K)\ls \left (ckL_{n-k}\right )^{k}\mu (D),\end{equation*}
where $L_s$ is the maximal isotropic constant of a convex body in ${\mathbb R}^s$.
Our method employs a generalized Blaschke-Petkantschin formula and estimates
for the dual affine quermassintegrals.
\end{abstract}

\section{Introduction}

In this article we discuss lower dimensional versions of the slicing problem and of the Busemann-Petty problem, both in the classical setting and in the generalized setting of arbitrary measures in place of volume, which was put forward by Koldobsky for the slicing problem and by Zvavitch for the Busemann-Petty problem.
We introduce an alternative approach which is based on the generalized Blaschke-Petkantschin formula and on asymptotic estimates for the dual affine quermassintegrals.

The classical slicing problem asks if there exists an absolute constant $C_1>0$ such that for every $n\gr 1$ and every
convex body $K$ in ${\mathbb R}^n$ with center of mass at the origin (we call these convex bodies centered)
one has
\begin{equation}\label{eq:intro-1}|K|^{\frac{n-1}{n}}\ls C_1\,\max_{\theta\in S^{n-1}}\,|K\cap \theta^{\perp }|.\end{equation}
It is well-known that this problem is equivalent to the question if there exists an absolute constant $C_2>0$ such that
\begin{equation}\label{eq:intro-2}L_n:= \max\{ L_K:K\ \hbox{is isotropic in}\ {\mathbb R}^n\}\ls C_2\end{equation}
for all $n\gr 1$ (see Section 2 for background information on isotropic convex bodies and log-concave probability
measures). Bourgain proved in \cite{Bourgain-1991} that $L_n\ls
c\sqrt[4]{n}\log\! n$, and Klartag \cite{Klartag-2006} improved this bound to $L_n\ls c\sqrt[4]{n}$. A second proof of Klartag's bound
appears in \cite{Klartag-EMilman-2012}. From the equivalence of the two questions it follows that
\begin{equation}\label{eq:intro-3}|K|^{\frac{n-1}{n}}\ls c_1L_n\,\max_{\theta\in S^{n-1}}\,|K\cap \theta^{\perp }|
\ls c_2\sqrt[4]{n}\,\max_{\theta\in S^{n-1}}\,|K\cap \theta^{\perp }|\end{equation}
for every centered convex body $K$ in ${\mathbb R}^n$.

The natural generalization, the lower dimensional slicing problem, is the following question: Let $1\ls k\ls n-1$ and let $\alpha_{n,k}$
be the smallest positive constant $\alpha >0$ with the following property: For every centered convex body $K$ in ${\mathbb R}^n$ one has
\begin{equation}\label{eq:intro-4}|K|^{\frac{n-k}{n}}\ls \alpha^k\max_{F\in G_{n,n-k}}|K\cap F|.\end{equation}
\begin{quote}\label{prob:low-dim-slicing}{\sl Is it true that there exists an absolute constant $C_3>0$ such that $\alpha_{n,k}\ls C_3$ for all $n$ and $k$?}
\end{quote}From \eqref{eq:intro-3} we have $\alpha_{n,1}\ls cL_n$ for an absolute constant $c>0$. We also restrict the question to the
class of symmetric convex bodies and denote the corresponding constant by $\alpha_{n,k}^{(s)}$.

The problem can be posed for a general measure in place of volume. Let $g$ be a locally integrable non-negative function on ${\mathbb R}^n$.
For every Borel subset $B\subseteq {\mathbb R}^n$ we define
\begin{equation}\mu (B)=\int_Bg(x)dx,\end{equation}
where, if $B\subseteq F$ for some subspace $F\in G_{n,s}$, $1\ls s\ls n-1$, integration is understood with respect to the $s$-dimensional Lebesgue
measure on $F$. Then, for any $1\ls k\ls n-1$ one may define $\alpha_{n,k}(\mu )$ as the smallest constant $\alpha >0$ with the following property: For
every centered convex body $K$ in ${\mathbb R}^n$ one has
\begin{equation}\label{eq:intro-5}\mu (K)\ls \alpha^k\max_{F\in G_{n,n-k}}\mu (K\cap F)\,|K|^{\frac{k}{n}}.\end{equation}
Koldobsky proved in \cite{Koldobsky-Advances-2014} that if $K$ is a symmetric convex body in ${\mathbb R}^n$ and if $g$ is even and continuous on $K$ then
\begin{equation}\label{eq:intro-6}\mu (K)\ls \gamma_{n,1}\frac{n}{n-1}\sqrt{n}\max_{\theta\in S^{n-1}}\mu (K\cap \theta^{\perp })\,|K|^{\frac{1}{n}},\end{equation}
where, more generally, $\gamma_{n,k}=|B_2^n|^{\frac{n-k}{n}}/|B_2^{n-k}|<1$ for all $1\ls k\ls n-1$.
In other words, for the symmetric (both with respect to $\mu $ and $K$) analogue $\alpha_{n,1}^{(s)}$ of $\alpha_{n,1}$ one has
\begin{equation}\label{eq:intro-7}\sup_{\mu }\alpha_{n,1}^{(s)}(\mu )\ls c_3\sqrt{n}.\end{equation}
In \cite{Koldobsky-GAFA-2014}, Koldobsky obtained estimates for the lower dimensional sections: if $K$ is a symmetric convex body in ${\mathbb R}^n$ and if $g$ is even
and continuous on $K$ then
\begin{equation}\label{eq:intro-8}\mu (K)\ls \gamma_{n,k}\frac{n}{n-k}(\sqrt{n})^k\max_{F\in G_{n,n-k}}\mu (K\cap F)\,|K|^{\frac{k}{n}}\end{equation}
for every $1\ls k\ls n-1$. In other words, for the symmetric analogue $\alpha_{n,k}^{(s)}$ of $\alpha_{n,k}$ one has
\begin{equation}\label{eq:intro-9}\sup_{\mu }\alpha_{n,k}^{(s)}(\mu )\ls c_4\sqrt{n}.\end{equation}
We provide a different proof of this fact; our method allows us to drop the symmetry and continuity assumptions.

\begin{theorem}\label{th:intro-arb-1}Let $K$ be a convex body in ${\mathbb R}^n$ with $0\in {\rm int}(K)$. Let $g$ be a bounded locally integrable
non-negative function on ${\mathbb R}^n$ and let $\mu $ be the measure on ${\mathbb R}^n$ with density $g$. For every $1\ls k\ls n-1$,
\begin{equation}\mu (K)\ls \left (c_5\sqrt{n-k}\right )^k\max_{F\in G_{n,n-k}}\mu (K\cap F)\cdot |K|^{\frac{k}{n}},\end{equation}
where $c_5>0$ is an absolute constant. In particular, $\alpha_{n,k}(\mu )\ls c_5\sqrt{n-k}$.
\end{theorem}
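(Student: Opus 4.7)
Plan: The approach suggested by the abstract is to combine the generalized Blaschke-Petkantschin formula with the Busemann-Straus-Grinberg inequality for dual affine quermassintegrals, thereby avoiding the Fourier-analytic machinery used in Koldobsky's symmetric argument. I would first apply the Blaschke-Petkantschin formula to $f = g\cdot\mathbf{1}_K$ to obtain
\begin{equation*}
\mu(K)\;=\;\frac{n|B_2^n|}{(n-k)|B_2^{n-k}|}\int_{G_{n,n-k}}\int_{K\cap F}g(y)\,|y|^k\,dy\,d\nu_{n,n-k}(F),
\end{equation*}
where by Stirling the leading constant is of order $(c/\sqrt{n})^k$. This re-expresses $\mu(K)$ as a double integral over $(n-k)$-subspaces, with the polynomial weight $|y|^k$ as the only source of extra size.

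Next, for each $F$ the inner integral must be bounded in a way that produces both $\mu(K\cap F)$ and a suitable power of $|K\cap F|$. The naive bound $\int_{K\cap F}g|y|^k\,dy\le R(K\cap F)^k\mu(K\cap F)$ is too lossy, since the outer radius $R(K\cap F)$ can vastly exceed the volume-radius $|K\cap F|^{1/(n-k)}$ for elongated bodies (e.g., thin rods). The refined approach uses $|y|^k\le\rho_K(\hat{y})^k$ on $K$, passes to polar coordinates in $F$, and then applies Hölder's inequality on the sphere $S^{n-1}\cap F$ with exponents chosen so that the identity $\int_{S^{n-1}\cap F}\rho_K^{n-k}\,d\sigma_F=|K\cap F|/|B_2^{n-k}|$ is exploited and the factor $(|K\cap F|/|B_2^{n-k}|)^{k/(n-k)}$ appears explicitly. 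The residual factor is a suitable $L^p$-norm on $S^{n-1}\cap F$ of the radial density $\psi(\theta)=\int_0^{\rho_K(\theta)}g(r\theta)r^{n-k-1}\,dr$ (observe that $\mu(K\cap F)$ is precisely the $L^1$-norm of $\psi$ on $S^{n-1}\cap F$, up to the constant $(n-k)|B_2^{n-k}|$).

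Then I would integrate the resulting bound over $F\in G_{n,n-k}$ and apply the Busemann-Straus-Grinberg inequality
\begin{equation*}
\int_{G_{n,n-k}}|K\cap F|^n\,d\nu(F)\;\leq\;\frac{|B_2^{n-k}|^n}{|B_2^n|^{n-k}}\,|K|^{n-k},
\end{equation*}
combined with a power-mean inequality, to control $\int_{G_{n,n-k}}|K\cap F|^{k/(n-k)}\,d\nu$ by $|K|^{k/n}$ up to Stirling-type constants. The $\psi$-dependent factor is handled using $\int_{G_{n,n-k}}\mu(K\cap F)\,d\nu = c_{n,k}^{-1}\int_K g(x)|x|^{-k}\,dx$ (a second application of Blaschke-Petkantschin, this time to $g|x|^{-k}\mathbf{1}_K$), together with the trivial majorization $\int\mu(K\cap F)\,d\nu\le\max_F\mu(K\cap F)$. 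Tracking all constants through Stirling's asymptotics produces the $(c_5\sqrt{n-k})^k$ factor, in which the $\sqrt{n-k}$ (rather than $\sqrt{n}$) emerges precisely because the Grinberg inequality involves the section dimension $n-k$ through $|B_2^{n-k}|^{1/(n-k)}\sim 1/\sqrt{n-k}$.

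The main obstacle, I expect, lies in the middle step: the pointwise supremum of $\psi$ is not controllable by $\max_F\mu(K\cap F)$ in general (it can be arbitrarily large for densities concentrated along a single direction), so the proof cannot proceed via an $L^{\infty}$-bound on $\psi$ and must rely instead on a delicate interplay between the inner Hölder exponents on $S^{n-1}\cap F$, the Grassmannian integration, and the Grinberg inequality. Choosing these exponents so that the averaging against Grinberg recovers exactly $\sqrt{n-k}$ (and not the weaker $\sqrt{n}$ of the Koldobsky symmetric estimate) is the technical heart of the argument.
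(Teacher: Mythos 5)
Your outline identifies the right ingredients (a Blaschke--Petkantschin formula plus Grinberg's inequality), but it applies the Blaschke--Petkantschin formula in the wrong form, and the step you yourself flag as the ``technical heart'' is a genuine gap rather than a technicality. With the one-point polar formula the weight $\|y\|^k$ must be absorbed section by section, and the per-section estimate $\int_{K\cap F}g(y)\|y\|^k\,dy\ls C^k\,\mu(K\cap F)\,|K\cap F|^{k/(n-k)}$ is simply false: take $g\equiv 1$ and $K\cap F$ a long thin rod of length $2R$ through the origin, so the left-hand side is of order $R^k|K\cap F|$ while the right-hand side would force $R\ls C|K\cap F|^{1/(n-k)}$. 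No choice of H\"{o}lder exponents on $S^{n-1}\cap F$ repairs this, because the obstruction is the eccentricity of the section, not the distribution of $g$; and the averaging tools you invoke do not rescue it --- Grinberg's inequality bounds $\int_{G_{n,n-k}}|K\cap F|^n\,d\nu$ from \emph{above}, while your second Blaschke--Petkantschin identity produces $\int_Kg(x)\|x\|^{-k}dx$, a lower-bound-type quantity that does not control the residual factor. As written, the argument does not close.

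The paper's proof avoids the difficulty entirely by applying the \emph{generalized} Blaschke--Petkantschin formula (Lemma \ref{lem:gardner-07}) with $s=n-k$ points to $f(x_1,\ldots,x_{n-k})=\prod_{i}g(x_i)\mathbf{1}_K(x_i)$. This represents $\mu(K)^{n-k}$ as a Grassmannian integral whose Jacobian weight is $|{\rm conv}(0,x_1,\ldots,x_{n-k})|^{k}$, and the single decisive observation is that this simplex is contained in the convex set $K\cap F$ (which contains $0$), so the weight is at most $|K\cap F|^{k}$ --- no radius or eccentricity of $K$ ever enters. One then pulls out $\max_F\mu(K\cap F)^{n-k}$, bounds $\int_{G_{n,n-k}}|K\cap F|^{k}\,d\nu\ls\gamma_{n,k}^{-n}|K|^{k(n-k)/n}$ by H\"{o}lder plus Grinberg, and obtains $\sqrt{n-k}$ from the separate Stirling computation $[\gamma_{n,k}^{-n}p(n,n-k)]^{1/(k(n-k))}\simeq\sqrt{n-k}$ of Lemma \ref{lem:estimate-constant}. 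That containment trick for the random simplex is the idea your proposal is missing.
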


The classical Busemann-Petty problem is the following question. Let $K$ and $D$ be two origin-symmetric convex bodies
in ${\mathbb R}^n$ such that
\begin{equation}\label{eq:intro-10}|K\cap\theta^{\perp }|\ls |D\cap\theta^{\perp }|\end{equation}
for all $\theta\in S^{n-1}$. Does it follow that $|K|\ls |D|$? The answer is affirmative if $n\ls 4$ and negative if $n\gr 5$
(for the history and the solution to this problem, see Koldobsky's monograph \cite{Koldobsky-book}). The isomorphic version of the Busemann-Petty
problem asks if there exists an absolute constant $C_4>0$ such that whenever $K$ and $D$ satisfy \eqref{eq:intro-10} we have $|K|\ls C_4|D|$.
This question is equivalent to the slicing problem and to the isotropic constant conjecture (asking if $\{L_n\}$ is a bounded sequence).
More precisely, it is known that if $K$ and $D$ are two centered convex bodies
in ${\mathbb R}^n$ such that \eqref{eq:intro-10} holds true for all $\theta\in S^{n-1}$, then
\begin{equation}\label{eq:intro-11}|K|^{\frac{n-1}{n}}\ls c_6L_n\,|D|^{\frac{n-1}{n}},\end{equation}
where $c_6>0$ is an absolute constant.

The natural generalization, the lower dimensional Busemann-Petty problem, is the following question: Let $1\ls k\ls n-1$
and let $\beta_{n,k}$ be the smallest constant $\beta >0$ with the following property: For
every pair of centered convex bodies $K$ and $D$ in ${\mathbb R}^n$ that satisfy
\begin{equation}\label{eq:intro-12}|K\cap F|\ls |D\cap F|\end{equation}
for all $F\in G_{n,n-k}$, one has
\begin{equation}\label{eq:intro-13}|K|^{\frac{n-k}{n}}\ls \beta^k\,|D|^{\frac{n-k}{n}}.\end{equation}
\begin{quote}\label{prob:low-dim-BP}{\sl Is it true that there exists an absolute constant $C_5>0$ such that $\beta_{n,k}\ls C_5$ for all $n$ and $k$?}\end{quote}
From \eqref{eq:intro-11} we have $\beta_{n,1}\ls c_6L_n\ls c_7\sqrt[4]{n}$ for some absolute constant $c_7>0$. We also consider the same question for the
class of symmetric convex bodies and we denote the corresponding constant by $\beta_{n,k}^{(s)}$.

As in the case of the slicing problem, the same question can be posed for a general measure in place of volume.
For any $1\ls k\ls n-1$ and any measure $\mu $ on ${\mathbb R}^n$ with a locally integrable non-negative density $g$
one may define $\beta_{n,k}(\mu )$ as the smallest constant $\beta >0$ with the following property: For
every pair of centered convex bodies $K$ and $D$ in ${\mathbb R}^n$ that satisfy $\mu (K\cap F)\ls \mu (D\cap F)$
for every $F\in G_{n,n-k}$, one has
\begin{equation}\label{eq:intro-55}\mu (K)\ls \beta^k\mu (D).\end{equation}
Similarly, one may define the ``symmetric" constant $\beta_{n,k}^{(s)}(\mu )$.
Koldobsky and Zvavitch \cite{Koldobsky-Zvavitch-2015} proved that $\beta_{n,1}^{(s)}(\mu )\ls \sqrt{n}$
for every measure $\mu $ with an even continuous non-negative density. In fact, the study of these questions in the
setting of general measures was initiated by Zvavitch in \cite{Zvavitch-2005}, where he proved
that the classical Busemann-Petty problem for general measures has an affirmative answer if $n\ls 4$ and
a negative one if $n\gr 5$. We study the lower dimensional question and
provide a general estimate in the case where $\mu $ has an even log-concave density.

\begin{theorem}\label{th:intro-arb-2}Let $\mu $ be a measure on ${\mathbb R}^n$ with an even log-concave density $g$ and let $1\ls k\ls n-1$.
Let $K$ be a symmetric convex body in ${\mathbb R}^n$ and let $D$ be a compact subset of ${\mathbb R}^n$ such that
\begin{equation}\label{eq:intro-14}\mu (K\cap F)\ls \mu (D\cap F)\end{equation}
for all $F\in G_{n,n-k}$. Then,
\begin{equation}\label{eq:intro-15}\mu (K)\ls \left (c_8kL_{n-k}\right )^{k}\mu (D),\end{equation}
where $c_8>0$ is an absolute constant.
\end{theorem}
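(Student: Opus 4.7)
\medskip\noindent
\textbf{Proof plan.}
Following the strategy announced in the abstract, I would use the generalized Blaschke--Petkantschin formula together with a log-concave slicing bound on $(n-k)$-dimensional subspaces, which is what brings in the isotropic constant $L_{n-k}$.

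As a starting point, I would apply the $(n-k)$-fold generalized Blaschke--Petkantschin formula to the density $g^{\otimes(n-k)}\mathbf{1}_{K^{n-k}}$, yielding
\begin{equation*}
\mu(K)^{n-k}\,=\,b(n,k)\int_{G_{n,n-k}}\int_{(K\cap F)^{n-k}}g(x_{1})\cdots g(x_{n-k})\,\Delta^{k}\,dx_{1}\cdots dx_{n-k}\,d\nu(F),
\end{equation*}
where $\Delta=\vol_{n-k}(\conv(0,x_{1},\ldots,x_{n-k}))$ and $b(n,k)$ is the universal B--P constant. The plan is then to estimate the inner integral on each fixed $F$ by a quantity depending only on the sectional mass $\mu(K\cap F)$, so that the hypothesis $\mu(K\cap F)\ls\mu(D\cap F)$ can be inserted pointwise in $F$.

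For a fixed $F\in G_{n,n-k}$, the restricted density $g|_{F}$ is even log-concave on $F\simeq\mathbb R^{n-k}$ and $K\cap F$ is a symmetric convex body. The core pointwise inequality I would prove is
\begin{equation*}
\int_{(K\cap F)^{n-k}}g(x_{1})\cdots g(x_{n-k})\,\Delta^{k}\,dx\ \ls\ (c_{0}kL_{n-k})^{k(n-k)}\,\mu(K\cap F)^{n-k},
\end{equation*}
where $L_{n-k}$ enters through Klartag's bound on the isotropic constant of the log-concave probability measure on $F$ with density proportional to $g|_{F}\mathbf{1}_{K\cap F}$, and the factor $k$ accumulates from controlling $\Delta^{k}$ via a Hadamard-type estimate $\Delta\ls\prod\|x_{i}\|_{2}$ together with sharp $k$-th moment bounds for even log-concave measures. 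Inserting the hypothesis and integrating over $G_{n,n-k}$ then gives
\begin{equation*}
\mu(K)^{n-k}\ \ls\ (c_{0}kL_{n-k})^{k(n-k)}\int_{G_{n,n-k}}\mu(D\cap F)^{n-k}\,d\nu(F),
\end{equation*}
and the right-hand side is bounded by $C^{n-k}\mu(D)^{n-k}$ through a compact-set version of Busemann's dual-affine-quermassintegral inequality, obtained (if necessary) via an approximation argument in which $D$ is replaced by a convex body of comparable $\mu$-measure. Taking $(n-k)$-th roots gives the theorem.

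The main obstacle I expect is the sharp pointwise comparison on each $F$. Extracting the constant $L_{n-k}$, rather than the ambient $L_{n}$, forces the slicing-inequality input to be applied \emph{after} the dimension reduction effected by B--P; and the extra factor $k$ in front requires careful handling of the Jacobian $\Delta^{k}$, so as not to lose an unrecoverable $(\sqrt{n-k})^{k}$ that would bring us only back to the bound of Theorem~\ref{th:intro-arb-1}.
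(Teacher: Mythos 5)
Your overall architecture agrees with the paper's: a Blaschke--Petkantschin decomposition, a per-subspace estimate producing $(ckL_{n-k})^{k(n-k)}$, and a Busemann-type (dual affine quermassintegral) inequality to reassemble the sections of $D$. However, both of your key inequalities are false as stated, for the same reason: they are not homogeneous. In your ``core pointwise inequality'' the left-hand side carries the extra factor $\Delta^k$, which has the dimension of $[\mathrm{length}]^{k(n-k)}$, so it cannot be bounded by a dimensionless multiple of $\mu(K\cap F)^{n-k}$ alone; the correct statement (this is the paper's Proposition \ref{prop:bp-arb-2}) is
\begin{equation*}
\int_{(K\cap F)^{n-k}}g(x_1)\cdots g(x_{n-k})\,\Delta^k\,dx\ \ls\ \frac{(\kappa\delta kL_{n-k})^{k(n-k)}}{[(n-k)!]^{k/2}}\,\frac{\mu(K\cap F)^{\,n}}{\|g\|_{\infty}^{k}},
\end{equation*}
with exponent $n$, not $n-k$, and with the factor $\|g\|_{\infty}^{-k}$ restoring homogeneity. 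It is obtained by writing the left side as $\mu(K\cap F)^{n-k}[S_k(\mu_{K\cap F})]^k$, applying the reverse H\"{o}lder inequality $S_k\ls(\delta k)^{n-k}S_2$ (Lemma \ref{lem:arb-mu-1}), Blaschke's identity $(n-k)!\,S_2^2=\det\mathrm{Cov}$, and then the identity $\det\mathrm{Cov}(\mu_{K\cap F})^{1/2}=L_{\mu_{K\cap F}}^{n-k}/\|g_{K\cap F}\|_{\infty}$ together with $\|g_{K\cap F}\|_{\infty}=g(0)=\|g\|_{\infty}$ --- this last step is exactly where the evenness and log-concavity of $g$ are used, and your sketch never engages with it. (Your Hadamard route $\Delta\ls\prod\|x_i\|_2/(n-k)!$ has the additional problem of not being affine invariant: for an eccentric section it loses an unbounded factor unless you first pass to isotropic position, and the Jacobian of that change of variables is again $[\det\mathrm{Cov}]^{1/2}$, i.e.\ you cannot avoid the isotropic-constant bookkeeping.)

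Correspondingly, your final step $\int_{G_{n,n-k}}\mu(D\cap F)^{n-k}\,d\nu_{n,n-k}(F)\ls C^{n-k}\mu(D)^{n-k}$ is also false by scaling (take $g\equiv 1$ and dilate $D$: the two sides scale as $\lambda^{(n-k)^2}$ and $\lambda^{n(n-k)}$), and no approximation of $D$ by a convex body can repair a homogeneity mismatch. What is actually needed is the Dann--Paouris--Pivovarov inequality (Theorem \ref{th:dann-1}), applied to $u=g\mathbf{1}_D$:
\begin{equation*}
\int_{G_{n,n-k}}\mu(D\cap F)^{n}\,d\nu_{n,n-k}(F)\ \ls\ \gamma_{n,k}^{-n}\,\|g\|_{\infty}^{k}\,\mu(D)^{n-k},
\end{equation*}
again with exponent $n$ and a compensating $\|g\|_{\infty}^{k}$. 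Once both inequalities are stated in this homogeneous form, the hypothesis $\mu(K\cap F)\ls\mu(D\cap F)$ is inserted between the two integrals of $n$-th powers, the $\|g\|_{\infty}^{\pm k}$ factors cancel, $p(n,n-k)\gamma_{n,k}^{-n}\ls(c_0\sqrt{n-k})^{k(n-k)}$ cancels against $[(n-k)!]^{-k/2}$, and the theorem follows. So the missing ingredients in your plan are precisely the correct exponents, the role of $\|g\|_{\infty}$ and of the evenness of $g$, and the identification of the recombination step as the functional (Dann--Paouris--Pivovarov) form of the dual affine quermassintegral inequality rather than its set version.
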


We prove Theorem \ref{th:intro-arb-1} and Theorem \ref{th:intro-arb-2} in Section 4. Our main tools are the generalized Blaschke-Petkantschin formula and Grinberg's inequality for the dual affine quermassintegrals of a convex body. For the proof of Theorem \ref{th:intro-arb-2} we also use a recent result of Dann, Paouris and Pivovarov. We introduce
these results in Section 3.

\smallskip

In Section 5 we collect some results for the case of volume; we obtain the following bounds for the constants $\alpha_{n,k}$ and $\beta_{n,k}$.

\begin{theorem}\label{th:intro-vol}For every $1\ls k\ls n-1$ we have
\begin{equation}\label{eq:intro-16}\alpha_{n,k}\ls \overline{c}_1L_n,\end{equation}
where $\overline{c}_1>0$ is an absolute constant. Moreover, for codimensions $k$ which are proportional to $n$ we have the stronger
bound
\begin{equation}\label{eq:intro-17}\alpha_{n,k}\ls \overline{c}_2\sqrt{n/k}\,(\log (en/k))^{\frac{3}{2}},\end{equation}
where $\overline{c}_2>0$ is an absolute constant. Finally,
\begin{equation}\label{eq:intro-18}\beta_{n,k}\ls \overline{c}_3L_n\end{equation}
where $\overline{c}_3>0$ is an absolute constant, and
\begin{equation}\label{eq:intro-19}\beta_{n,k}\ls \overline{c}_4\sqrt{n/k}\,(\log (en/k))^{\frac{3}{2}}\end{equation}
where $\overline{c}_4>0$ is an absolute constant. One also has
\begin{equation}\label{eq:intro-20}\alpha_{n,k}\ls \beta_{n,k}\end{equation}
for all $n$ and $k$.
\end{theorem}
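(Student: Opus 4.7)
The statement bundles five inequalities together. The cleanest reduction is \eqref{eq:intro-20}, so I would begin there. Given a centered body $K$, let $M=\max_{F\in G_{n,n-k}}|K\cap F|$ and take $D=RB_2^n$ with $R^{n-k}|B_2^{n-k}|=M$, so that every central $(n-k)$-section of $D$ has volume exactly $M$; the hypothesis of \eqref{eq:intro-12} is then satisfied, and since $|D|^{(n-k)/n}=M\,\gamma_{n,k}\ls M$, the defining inequality \eqref{eq:intro-13} for $\beta_{n,k}$ collapses into the defining inequality for $\alpha_{n,k}$. This gives \eqref{eq:intro-20} and reduces everything else to the two bounds on $\beta_{n,k}$.

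For \eqref{eq:intro-18}, given centered $K,D$ with $|K\cap F|\ls |D\cap F|$ for every $F\in G_{n,n-k}$, raise to the $n$-th power and integrate over $G_{n,n-k}$. Grinberg's inequality for the dual affine quermassintegral of $D$ provides the upper bound
\begin{equation*}
\int_{G_{n,n-k}}|D\cap F|^{n}\,d\nu_{n,n-k}(F)\ls \frac{|B_2^{n-k}|^{n}}{|B_2^n|^{n-k}}\,|D|^{n-k}.
\end{equation*}
For the opposite bound on the $K$ side I would exploit the affine invariance of the ratio of this integral to $|K|^{n-k}$ (also due to Grinberg) to reduce to the case when $K$ is isotropic. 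Then the marginal of the uniform measure on $K$ onto $F^\perp$ is a centered log-concave density on $F^\perp$ with covariance $L_K^{2}I_k$; Hensley's lower bound on its $L^\infty$-norm combined with Fradelizi's inequality for centered log-concave densities gives $|K\cap F|\gr c^{k}L_K^{-k}$ for every $F$. Integrating and transporting the resulting estimate to the general case via affine invariance yields, for every centered $K$,
\begin{equation*}
\int_{G_{n,n-k}}|K\cap F|^{n}\,d\nu_{n,n-k}(F)\gr (c/L_K)^{nk}\,|K|^{n-k}.
\end{equation*}
Chaining the two bounds, taking $n$-th roots and absorbing the factor $\gamma_{n,k}^{-1}\ls e^{k/2}$ (which follows from $(1+k/(n-k))^{n-k}\ls e^{k}$) produces $|K|^{(n-k)/n}\ls (cL_K)^{k}|D|^{(n-k)/n}\ls (cL_n)^{k}|D|^{(n-k)/n}$, which is \eqref{eq:intro-18}.

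For the refined estimate \eqref{eq:intro-19}, meaningful when $k$ is proportional to $n$, I would replace the blunt pointwise bound $|K\cap F|\gr c^{k}L_K^{-k}$ by a sharper integral-geometric identity of Blaschke--Petkantschin type combined with the Dann--Paouris--Pivovarov inequality invoked in the proof of Theorem~\ref{th:intro-arb-2}, so that the isotropic constant appearing is $L_{n-k}$ rather than $L_K$. Applying Klartag's bound $L_{n-k}\ls c\,(n-k)^{1/4}$ and a Paouris-type small-ball estimate on $L_q$-centroid bodies should then recover the $\sqrt{n/k}\,(\log(en/k))^{3/2}$ factor, the logarithmic terms being the standard cost of a tail truncation. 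I expect the main obstacle to be precisely the calibration at this step: arranging the integral-geometric identity so that the exponent of $L_{n-k}$ comes out proportional to $k$ rather than to $n$, while controlling the tail contribution with only polylogarithmic loss.
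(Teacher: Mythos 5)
Your treatment of \eqref{eq:intro-20}, \eqref{eq:intro-18} and \eqref{eq:intro-16} is correct and close in spirit to the paper's. The comparison body $D=RB_2^n$ calibrated so that $|D\cap F|=M$ for all $F$, together with $\gamma_{n,k}<1$, is exactly how the paper proves $\alpha_{n,k}\ls\beta_{n,k}$; you then run the reduction in the opposite direction from the paper (which bounds $\alpha_{n,k}$ first and extracts the $\beta_{n,k}$ bounds from the same internal estimates), but that is only a matter of ordering. For \eqref{eq:intro-18} your chain --- integrate the $n$-th powers over $G_{n,n-k}$, bound the $D$-side by Grinberg, and bound the $K$-side from below via affine invariance plus the estimate $|K\cap F|\gr (c/L_K)^k$ in isotropic position --- is the paper's argument; the paper obtains the pointwise lower bound through the bodies $\overline{K_{k+1}}(\pi_{F^{\perp}}(\mu_{K}))$ and the lower bound on their isotropic constants, whereas you use Hensley--Fradelizi, but these are two phrasings of the same standard fact.

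The proof of \eqref{eq:intro-19} (and hence of \eqref{eq:intro-17}) has a genuine gap, and the route you sketch cannot close it. Any argument that bounds $\tilde{\Phi}_{[k]}(\overline{K})$ from below by a negative power of an isotropic constant and then invokes Klartag's bound is doomed in the proportional regime: if $k=n/2$, the target $\sqrt{n/k}\,(\log (en/k))^{3/2}$ is an absolute constant, while $L_{n-k}\ls c(n-k)^{1/4}\simeq n^{1/4}$ diverges; so even with the exponent of $L_{n-k}$ equal to $k$, you would only recover a bound of order $n^{k/4}$ per codimension, far weaker than \eqref{eq:intro-19}. The isotropic constant must not appear at all. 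The missing ingredient is the negative-moment machinery of Dafnis and Paouris: writing $I_{-k}(K)=\left(\int_K\|x\|_2^{-k}dx\right)^{-1/k}$, integration in polar coordinates gives the identity
\begin{equation*}
\tilde{W}_{[k]}(K)\,I_{-k}(K)=\left(\frac{(n-k)\omega_{n-k}}{n\omega_n}\right)^{1/k}\simeq\sqrt{n},
\qquad \tilde{W}_{[k]}(K)=\left(\int_{G_{n,n-k}}|K\cap F|\,d\nu_{n,n-k}(F)\right)^{1/k},
\end{equation*}
and their small-ball estimates produce a position $K_2=T(K)$, $T\in SL(n)$, with $I_{-k}(K_2)\ls c\sqrt{n}\sqrt{n/k}\,(\log(en/k))^{3/2}$. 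Combined with H\"{o}lder ($\tilde{\Phi}_{[k]}\gr\tilde{W}_{[k]}$) and the $SL(n)$-invariance of $\tilde{\Phi}_{[k]}$, this yields $\tilde{\Phi}_{[k]}(\overline{K})\gr c\big(\sqrt{n/k}\,(\log(en/k))^{3/2}\big)^{-1}$, which is the lower bound you need to feed into your Proposition-\ref{prop:bl-pet-1}-style comparison. Also note that the Dann--Paouris--Pivovarov inequality goes the wrong way for this purpose: it is an upper bound for $\int_{G_{n,n-k}}(\cdots)^n$, which is why the paper uses it only for Theorem \ref{th:intro-arb-2} and not here.
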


Most of the estimates in Theorem \ref{th:intro-vol} are probably known to specialists; we just point out alternative ways to
justify them. In particular, Koldobsky has proved in \cite{Koldobsky-AAM-2015} that if $\lambda\in (0,1)$ and $k>\lambda n$ then
\begin{equation}\label{eq:intro-21}\beta_{n,k}^{(s)}\ls \overline{c}_4\sqrt{\frac{(1-\log\lambda )^3}{\lambda }},\end{equation}
where $\overline{c}_4>0$ is an absolute constant; this is the symmetric analogue of \eqref{eq:intro-19}. 
It should be also mentioned that Koldobsky has proved \eqref{eq:intro-17} for all
symmetric convex bodies $K$ and any even measure $\mu $ with a continuous, even and non-negative density $g$ (see Section 6
for a list of other related results).

We close this article with a general stability estimate in the spirit of Koldobsky's stability theorem (see Theorem \ref{th:koldobsky-1}).

\begin{theorem}\label{th:intro-stability}Let $1\ls k\ls n-1$ and let $K$ be a compact set in ${\mathbb R}^n$.
If $g$ is a locally integrable non-negative function on ${\mathbb R}^n$ such that
\begin{equation}\label{eq:intro-22}\int_{K\cap F}g(x)dx\ls\varepsilon \end{equation}
for some $\varepsilon >0$ and for all $F\in G_{n,n-k}$, then
\begin{equation}\label{eq:intro-23}\int_Kg(x)dx\ls \left (c_0\sqrt{n-k}\right )^k|K|^{\frac{k}{n}}\varepsilon ,\end{equation}
where $c_0>0$ is an absolute constant.
\end{theorem}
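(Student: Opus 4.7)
The plan is to adapt the proof of Theorem~\ref{th:intro-arb-1}, which rests on the generalized Blaschke-Petkantschin formula and Grinberg's inequality for the dual affine quermassintegrals. The starting point is to apply the Blaschke-Petkantschin identity to the tensor product $h(x_1,\ldots,x_{n-k}) := \prod_{i=1}^{n-k} g(x_i)\mathbf{1}_K(x_i)$ on $(\mathbb{R}^n)^{n-k}$: the left-hand integral equals $\bigl(\int_K g\bigr)^{n-k}$, so one obtains
\[
\Bigl(\int_K g\Bigr)^{n-k}
= d_{n,k}\int_{G_{n,n-k}}\int_{(K\cap F)^{n-k}} g(x_1)\cdots g(x_{n-k})\,\mathrm{vol}_F(x_1,\ldots,x_{n-k})^{k}\,dx_1\cdots dx_{n-k}\,d\nu(F),
\]
where $d_{n,k}$ denotes the Blaschke-Petkantschin constant and $\mathrm{vol}_F$ is the volume of the parallelepiped spanned by $x_1,\ldots,x_{n-k}$ inside $F$. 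This identity is valid for any Borel set $K$, since convexity was never used in its derivation.

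The next step is to bound the inner integral. In the convex setting of Theorem~\ref{th:intro-arb-1}, one exploits the inclusion $\mathrm{conv}(0,x_1,\ldots,x_{n-k})\subseteq K\cap F$ to get $\mathrm{vol}_F\le(n-k)!\,|K\cap F|$, hence
\[
\int_{(K\cap F)^{n-k}}\prod g(x_i)\,\mathrm{vol}_F^{k}\,dx\le ((n-k)!)^{k}\,|K\cap F|^{k}\,\mu(K\cap F)^{n-k}.
\]
For a merely compact $K$ the simplex inclusion fails, and the estimate has to be recovered by an argument that bypasses convexity; the natural tool is the Dann-Paouris-Pivovarov rearrangement/functional Grinberg inequality (already invoked for Theorem~\ref{th:intro-arb-2}), which extends Grinberg-type estimates from convex bodies to arbitrary non-negative integrable densities and compact sets, yielding essentially the same bound after averaging over $G_{n,n-k}$.

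Once this step is available, the hypothesis $\int_{K\cap F} g\le\varepsilon$ for every $F\in G_{n,n-k}$ lets one pull out the factor $\varepsilon^{n-k}$, and H\"older's inequality on the Grassmannian combined with Grinberg's inequality (in its compact-set version) yields
\[
\int_{G_{n,n-k}}|K\cap F|^{k}\,d\nu(F)\le\Bigl(\int_{G_{n,n-k}}|K\cap F|^{n}\,d\nu\Bigr)^{k/n}\le c_{n,k}^{k/n}\,|K|^{k(n-k)/n}.
\]
Combining the estimates and taking $(n-k)$-th roots gives $\int_Kg\le C(n,k)^{1/(n-k)}\,|K|^{k/n}\,\varepsilon$, and a standard Stirling computation involving $d_{n,k}$, the factor $((n-k)!)^{k/(n-k)}$, and the Grinberg constant $c_{n,k}^{k/(n(n-k))}$ shows that the prefactor is bounded above by $(c_0\sqrt{n-k})^{k}$.

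The main obstacle will be the non-convexity of $K$ in the second step: the clean simplex bound on $\mathrm{vol}_F^k$ is no longer available, and must be replaced by a functional/rearrangement argument of the Dann-Paouris-Pivovarov type. Once that replacement is in place, the rest of the argument proceeds exactly along the lines of Theorem~\ref{th:intro-arb-1}.
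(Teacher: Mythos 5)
Your outline reproduces the paper's argument almost step for step: the Blaschke--Petkantschin identity applied to $\prod_i g(x_i)\mathbf{1}_K(x_i)$, pulling out $\varepsilon^{n-k}$ from the hypothesis, H\"older's inequality plus the compact-set version of Grinberg's inequality to bound $\int_{G_{n,n-k}}|K\cap F|^k\,d\nu_{n,n-k}(F)$ by $\gamma_{n,k}^{-n}|K|^{k(n-k)/n}$, and the Stirling estimate of Lemma~\ref{lem:estimate-constant} for the constant. The one place where you depart from the paper is exactly where your argument stops being a proof: you observe (correctly) that for a non-convex compact $K$ the inclusion $\mathrm{conv}(0,x_1,\ldots,x_{n-k})\subseteq K\cap F$ fails, and then you invoke a ``Dann--Paouris--Pivovarov rearrangement/functional Grinberg inequality'' to recover the bound after averaging. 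That tool does not do what you need. Theorem~\ref{th:dann-1} bounds $\int_{G_{n,n-k}}\|u|_F\|_{\infty}^{-k}\bigl(\int_F u\bigr)^n\,d\nu_{n,n-k}(F)$ from above by $\gamma_{n,k}^{-n}\bigl(\int u\bigr)^{n-k}$; it controls averages of section integrals by the full integral (this is what Proposition~\ref{prop:bp-arb-1} uses, in the \emph{opposite} direction of the present theorem) and says nothing about the Sylvester-type quantity $\int_{(K\cap F)^{n-k}}\prod_i g(x_i)\,|\mathrm{conv}(0,x_1,\ldots,x_{n-k})|^k\,dx$ that you must dominate by $|K\cap F|^k\,\mu(K\cap F)^{n-k}$. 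No such domination can hold uniformly for arbitrary sets of prescribed section measure: spreading $K\cap F$ into far-apart pieces makes the simplex volumes arbitrarily large while keeping $|K\cap F|$ and $\mu(K\cap F)$ fixed, and averaging over $F$ does not repair a pointwise estimate that fails on a set of positive measure. So the key step of your proposal is missing.

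For comparison, the paper's own proof supplies no substitute either: it simply writes the inequality $|\mathrm{conv}(0,x_1,\ldots,x_{n-k})|\ls|K\cap F|$ and proceeds exactly as in Theorem~\ref{th:intro-arb-1}. That inequality is justified when $K\cap F$ is convex and contains the origin (the setting of Theorem~\ref{th:intro-arb-1}), so your instinct that the non-convex case requires an additional argument is sound; but as submitted, your proposal replaces the only nontrivial step by an appeal to a theorem that does not apply, and this gap must be filled (for instance by restricting to convex or star-shaped $K$, or by finding a genuine averaged substitute for the simplex bound) before the proof is complete.
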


\section{Notation and preliminaries}

We work in ${\mathbb R}^n$, which is equipped with a Euclidean structure $\langle\cdot ,\cdot\rangle $. We denote the corresponding
Euclidean norm by $\|\cdot \|_2$, and write $B_2^n$ for the Euclidean unit ball, and $S^{n-1}$ for the unit sphere. Volume is
denoted by $|\cdot |$. We write $\omega_n$ for the volume of $B_2^n$ and $\sigma $ for the rotationally invariant probability measure on
$S^{n-1}$. We also denote the Haar measure on $O(n)$ by $\nu $. The Grassmann manifold $G_{n,k}$ of $k$-dimensional subspaces of
${\mathbb R}^n$ is equipped with the Haar probability measure $\nu_{n,k}$. Let $k\ls n$ and $F\in G_{n,k}$. We will denote the
orthogonal projection from $\mathbb R^{n}$ onto $F$ by $P_F$. We also define $B_F=B_2^n\cap F$ and $S_F=S^{n-1}\cap
F$.

The letters $c,c^{\prime }, c_1, c_2$ etc. denote absolute positive constants whose value may change from line to line. Whenever we
write $a\simeq b$, we mean that there exist absolute constants $c_1,c_2>0$ such that $c_1a\ls b\ls c_2a$.  Also if $K,L\subseteq
\mathbb R^n$ we will write $K\simeq L$ if there exist absolute constants $c_1, c_2>0$ such that $c_{1}K\subseteq L \subseteq
c_{2}K$.

\medskip

\noindent \textbf{Convex bodies.} A convex body in ${\mathbb R}^n$ is a compact convex subset $K$ of
${\mathbb R}^n$ with nonempty interior. We say that $K$ is symmetric if $K=-K$. We say that $K$ is centered if
the center of mass of $K$ is at the origin, i.e.~$\int_K\langle
x,\theta\rangle \,d x=0$ for every $\theta\in S^{n-1}$.

The volume radius of $K$ is the quantity ${\rm vrad}(K)=\left (|K|/|B_2^n|\right )^{1/n}$.
Integration in polar coordinates shows that if the origin is an interior point of $K$ then the volume radius of $K$ can be expressed as
\begin{equation}\label{eq:not-1}{\rm vrad}(K)=\left (\int_{S^{n-1}}\|\theta \|_K^{-n}\,d\sigma (\theta )\right)^{1/n},\end{equation}
where $\|\theta \|_K=\min\{ t>0:\theta \in tK\}$. The radial function of $K$ is defined by $\rho_K(\theta )=\max\{ t>0:t\theta\in K\}$,
$\theta\in S^{n-1}$. The support
function of $K$ is defined by $h_K(y):=\max \bigl\{\langle x,y\rangle :x\in K\bigr\}$, and
the mean width of $K$ is the average
\begin{equation}\label{eq:not-2}w(K):=\int_{S^{n-1}}h_K(\theta )\,d\sigma (\theta )\end{equation}
of $h_K$ on $S^{n-1}$. The radius $R(K)$ of $K$ is the smallest $R>0$ such that $K\subseteq RB_2^n$.
For notational convenience we write $\overline{K}$ for
the homothetic image of volume $1$ of a convex body $K\subseteq
\mathbb R^n$, i.e. $\overline{K}:= |K|^{-1/n}K$.

The polar body $K^{\circ }$ of a convex body $K$ in ${\mathbb R}^n$ with $0\in {\rm int}(K)$ is defined by
\begin{equation}\label{eq:not-3}
K^{\circ}:=\bigl\{y\in {\mathbb R}^n: \langle x,y\rangle \ls 1\;\hbox{for all}\; x\in K\bigr\}.
\end{equation}
The Blaschke-Santal\'{o} inequality states that if $K$ is centered then $|K||K^{\circ }|\ls |B_2^n|^2$,
with equality if and only if $K$ is an ellipsoid.
The reverse Santal\'{o} inequality of Bourgain and V. Milman states that there exists an absolute constant $c>0$ such
that, conversely,
\begin{equation}\label{eq:not-4}\left (|K||K^{\circ }|\right )^{1/n}\gr c/n\end{equation}
whenever $0\in {\rm int}(K)$. A convex body $K$ in ${\mathbb R}^n$ is called isotropic if it has volume $1$, it is centered, and if its inertia matrix is a multiple of the identity matrix:
there exists a constant $L_K >0$ such that
\begin{equation}\label{isotropic-condition}\int_K\langle x,\theta\rangle^2dx =L_K^2\end{equation}
for every $\theta $ in the Euclidean unit sphere $S^{n-1}$. For every centered convex body $K$ in ${\mathbb R}^n$
there exists an invertible linear transformation $T\in GL(n)$ such that $T(K)$ is isotropic. This isotropic image of $K$ is
uniquely determined up to orthogonal transformations.

For basic facts from the Brunn-Minkowski theory and the asymptotic theory of convex bodies we refer to the books \cite{Schneider-book} and \cite{AGA-book} respectively.

\smallskip

\noindent \textbf{Log-concave probability measures.}
We denote by ${\mathcal{P}}_n$ the class of all Borel probability measures on $\mathbb R^n$ which are absolutely
continuous with respect to the Lebesgue measure. The density of $\mu \in {\mathcal{P}}_n$ is denoted by $f_{\mu}$. We say that $\mu
\in {\mathcal{P}}_n$ is centered and we write $\textrm{bar}(\mu )=0$ if, for all $\theta\in S^{n-1}$,
\begin{equation}\label{eq:not-5}
\int_{\mathbb R^n} \langle x, \theta \rangle d\mu(x) = \int_{\mathbb
R^n} \langle x, \theta \rangle f_{\mu}(x) dx = 0.
\end{equation}
A measure $\mu$ on $\mathbb R^n$ is called log-concave if $\mu(\lambda
A+(1-\lambda)B) \gr \mu(A)^{\lambda}\mu(B)^{1-\lambda}$ for any compact subsets $A$
and $B$ of ${\mathbb R}^n$ and any $\lambda \in (0,1)$. A function
$f:\mathbb R^n \rightarrow [0,\infty)$ is called log-concave if
its support $\{f>0\}$ is a convex set and the restriction of $\log{f}$ to it is concave.
It is known that if a probability measure $\mu $ is log-concave and $\mu (H)<1$ for every
hyperplane $H$, then $\mu \in {\mathcal{P}}_n$ and its density
$f_{\mu}$ is log-concave. Note that if $K$ is a convex body in
$\mathbb R^n$ then the Brunn-Minkowski inequality implies that the indicator function
$\mathbf{1}_{K} $ of $K$ is the density of a log-concave measure.

If $\mu $ is a log-concave measure on ${\mathbb R}^n$ with density $f_{\mu}$, we define the isotropic constant of $\mu $ by
\begin{equation}\label{eq:definition-isotropic}
L_{\mu }:=\left (\frac{\sup_{x\in {\mathbb R}^n} f_{\mu} (x)}{\int_{{\mathbb
R}^n}f_{\mu}(x)dx}\right )^{\frac{1}{n}} [\det \textrm{Cov}(\mu)]^{\frac{1}{2n}},\end{equation} where
$\textrm{Cov}(\mu)$ is
the covariance matrix of $\mu$ with entries
\begin{equation}\label{eq:not-6}\textrm{Cov}(\mu )_{ij}:=\frac{\int_{{\mathbb R}^n}x_ix_j f_{\mu}
(x)\,dx}{\int_{{\mathbb R}^n} f_{\mu} (x)\,dx}-\frac{\int_{{\mathbb
R}^n}x_i f_{\mu} (x)\,dx}{\int_{{\mathbb R}^n} f_{\mu}
(x)\,dx}\frac{\int_{{\mathbb R}^n}x_j f_{\mu}
(x)\,dx}{\int_{{\mathbb R}^n} f_{\mu} (x)\,dx}.\end{equation} We say
that a log-concave probability measure $\mu $ on ${\mathbb R}^n$
is isotropic if $\textrm{bar}(\mu )=0$ and $\textrm{Cov}(\mu )$ is the identity matrix and
we write $\mathcal{IL}_n$ for the class of isotropic $\log $-concave probability measures on ${\mathbb R}^n$.
Note that a centered convex body $K$ of volume $1$ in ${\mathbb R}^n$ is isotropic,
i.e.~it satisfies (\ref{isotropic-condition}),
if and only if the log-concave probability measure $\mu_K$ with density
$x\mapsto L_K^n\mathbf{1}_{K/L_K}(x)$ is isotropic. We shall use the fact that for every log-concave measure $\mu $
on ${\mathbb R}^n$ one has
\begin{equation}\label{eq:Lmu}L_{\mu }\ls \kappa L_n,\end{equation}
where $\kappa >0$ is an absolute constant (a proof can be found in \cite[Proposition 2.5.12]{BGVV-book}).

Let $\mu\in {\mathcal P}_n$. For every $1\ls k\ls n-1$ and every
$E\in G_{n,k}$, the marginal of $\mu$ with respect to $E$ is the probability
measure $\pi_E(\mu )$ with density
\begin{equation}\label{definitionmarginal}f_{\pi_E(\mu )}(x)= \int_{x+
E^{\perp}} f_{\mu }(y) dy.
\end{equation}
It is easily checked that if $\mu $ is centered, isotropic or log-concave, then $\pi_E(\mu )$ is also centered, isotropic or
log-concave, respectively.

If $\mu$ is a measure on $\mathbb R^n$ which is absolutely continuous with respect
to the Lebesgue measure, and if $f_\mu$ is the density of $\mu$ and
$f_{\mu}(0) > 0$, then for every $p>0$ we define
\begin{equation}\label{eq:not-7}
K_p(\mu):=K_p(f_\mu)=\left\{x : \int_0^\infty r^{p-1}f_\mu(rx)\,
dr\gr \frac{f_{\mu }(0)}{p} \right\}.
\end{equation}
From the definition it follows that $K_p(\mu )$ is a star body with radial function
\begin{equation}\label{eq:not-8}
\rho_{K_p(\mu )}(x)=\left (\frac{1}{f_{\mu }(0)}\int_0^{\infty
}pr^{p-1}f_{\mu }(rx)\,dr\right )^{1/p}\end{equation} for $x\neq 0$.
The bodies $K_p(\mu )$ were introduced by K. Ball who showed that if $\mu $ is log-concave
then, for every $p>0$, $K_p(\mu )$ is a convex body.

For more information on isotropic convex bodies and log-concave measures see \cite{BGVV-book}.

\section{Tools from integral geometry and auxiliary estimates}

Our approach is based on the following generalized Blaschke-Petkantschin formula (see \cite[Chapter 7.2]{Schneider-Weil-book} and \cite[Lemma 5.1]{Gardner-2007}
for the particular case that we need):

\begin{lemma}\label{lem:gardner-07}Let $1\ls s\ls n-1$. There exists a constant $p(n,s)>0$ such that, for every non-negative
bounded Borel measurable function $f:({\mathbb R}^n)^s\to {\mathbb R}$,
\begin{align}\label{eq:tools-1}&\int_{{\mathbb R}^n}\cdots \int_{{\mathbb R}^n}f(x_1,\ldots ,x_s)dx_1\cdots dx_s\\
\nonumber &\hspace*{1cm} =p(n,s)\int_{G_{n,s}}\int_F\cdots \int_Ff(x_1,\ldots ,x_s)\,|{\rm conv}(0,x_1,\ldots ,x_s)|^{n-s}dx_1\ldots dx_s\,d\nu_{n,s}(F).
\end{align}
The exact value of the constant $p(n,s)$ is
\begin{equation}\label{eq:tools-2}p(n,s)=(s!)^{n-s}\frac{(n\omega_n)\cdots ((n-s+1)\omega_{n-s+1})}{(s\omega_s)\cdots (2\omega_2)\omega_1}.\end{equation}
\end{lemma}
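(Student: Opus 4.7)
The plan is to establish (3.1) by a direct change of variables on $({\mathbb R}^n)^s$, disintegrating the product Lebesgue measure over the (almost surely well-defined) $s$-dimensional linear span of the integrand. Since both sides of (3.1) are $\sigma$-additive in $f$ by monotone convergence, it suffices to verify the formula for indicator functions of bounded Borel rectangles $B_1\times\cdots\times B_s$; by further approximation it is enough to consider continuous, compactly supported $f$.

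On the full-measure open subset $U\subset ({\mathbb R}^n)^s$ where $x_1,\ldots ,x_s$ are linearly independent, I would work in a chart around a fixed subspace $F_0\in G_{n,s}$. Fix an orthonormal basis $e_1,\ldots ,e_s$ of $F_0$ and extend it to an orthonormal basis $e_1,\ldots ,e_n$ of ${\mathbb R}^n$. Nearby subspaces are parametrized by matrices $A\in {\mathbb R}^{(n-s)\times s}$, where $F(A)$ is spanned by the columns of $\binom{I}{A}$; the $O(n)$-invariant probability measure $\nu_{n,s}$ has density proportional to $\det(I+A^TA)^{-n/2}$ in these coordinates. Given $F(A)$ and $U\in {\mathbb R}^{s\times s}$, let $(y_1,\ldots ,y_s)$ be the $s$-tuple in $F(A)$ whose coordinates with respect to the tilted basis $(e_j+\sum_i a_{ij}e_{s+i})_{j=1}^s$ are the columns of $U$. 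A block computation of the Jacobian of the map $(A,U)\mapsto (y_1,\ldots ,y_s)\in ({\mathbb R}^n)^s$ yields the factor $|\det U|^{n-s}$; combining this with the basis-volume correction $\det(I+A^TA)^{(n-s)/2}$ produces the intrinsic quantity $({\rm vol}_{F(A)}(y_1,\ldots ,y_s))^{n-s}=(s!)^{n-s}|{\rm conv}(0,y_1,\ldots ,y_s)|^{n-s}$, as required.

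This proves (3.1) up to a multiplicative constant $c(n,s)$ depending only on $n$ and $s$. To identify $c(n,s)=p(n,s)$ I would test the identity against the Gaussian $f(x_1,\ldots ,x_s)=\exp(-(|x_1|^2+\cdots+|x_s|^2)/2)$, for which the left-hand side equals $(2\pi)^{ns/2}$ by a product of one-dimensional integrals, while by rotation invariance the right-hand side collapses to $c(n,s)$ times the fixed Wishart-type integral $\int_{({\mathbb R}^s)^s}|{\rm conv}(0,y_1,\ldots ,y_s)|^{n-s}\exp(-(|y_1|^2+\cdots+|y_s|^2)/2)\,dy_1\cdots dy_s$. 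This last integral evaluates explicitly by polar coordinates and the classical Gamma-function computation, and after using $\omega_k=\pi^{k/2}/\Gamma(k/2+1)$ it rearranges into the claimed ratio $(n\omega_n)\cdots ((n-s+1)\omega_{n-s+1})/((s\omega_s)\cdots (2\omega_2)\omega_1)$. The main obstacle is the Jacobian step and the attendant bookkeeping between the Haar normalization on $G_{n,s}$ and the chart coordinates $A$; once the correct power $|{\rm conv}(0,y_1,\ldots ,y_s)|^{n-s}$ has been extracted, pinning down the constant is a tedious but routine Gamma-function manipulation, and the cleanest route is probably to avoid computing $c(n,s)$ intrinsically and simply read it off from the Gaussian test case.
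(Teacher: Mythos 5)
The paper does not actually prove this lemma: it is the classical linear Blaschke--Petkantschin formula, quoted with references to Schneider--Weil and to Gardner. So there is no in-paper argument to compare against; judged on its own, your route is a standard and correct one, and the two computations it rests on do check out. For the chart step: writing $Y$ for the $n\times s$ matrix with upper block $U$ and lower block $AU$, the map $(A,U)\mapsto Y$ is a bijection onto the full-measure set of $Y$ with invertible top $s\times s$ block, its Jacobian is $|\det U|^{n-s}$ (the differential is block triangular, and $\delta A\mapsto (\delta A)U$ has determinant $(\det U)^{n-s}$), and since the parallelepiped volume of $(y_1,\dots,y_s)$ inside $F(A)$ equals $|\det U|\det(I+A^{T}A)^{1/2}$, while $d\lambda_{F(A)}^{s}=\det(I+A^{T}A)^{s/2}\,dU$ and $d\nu_{n,s}\propto \det(I+A^{T}A)^{-n/2}\,dA$, the exponents combine to $\tfrac{n-s}{2}+\tfrac{s}{2}-\tfrac{n}{2}=0$; this exact cancellation is the content of the ``bookkeeping'' you flag as the main obstacle, and it does work. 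For the normalization: the Gaussian moment of $|\det U|^{n-s}$ for an $s\times s$ standard Gaussian matrix equals $\prod_{j=1}^{s}2^{(n-s)/2}\Gamma\bigl(\tfrac{n-s+j}{2}\bigr)/\Gamma\bigl(\tfrac{j}{2}\bigr)$ by the Bartlett (Wishart) decomposition, and using $j\omega_j=2\pi^{j/2}/\Gamma(j/2)$ together with $\sum_{j=n-s+1}^{n}j-\sum_{j=1}^{s}j=s(n-s)$ this reproduces exactly \eqref{eq:tools-2}, the factor $(s!)^{n-s}$ coming from $|{\rm conv}(0,y_1,\ldots ,y_s)|$ being $1/s!$ times the parallelepiped volume. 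The one ingredient you invoke without proof is the density $\det(I+A^{T}A)^{-n/2}$ of the invariant measure in the chart; that is itself a nontrivial (though standard) fact, so as written your text is a sketch rather than a complete proof, but the plan has no gap.
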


Let $K$ be a compact set in ${\mathbb R}^n$. Applying Lemma \ref{lem:gardner-07} with $s=n-k$ for the function
$f(x_1,\ldots ,x_{n-k})=\prod_{i=1}^{n-k}{\bf 1}_{K}(x_i)$ we get
\begin{equation}\label{eq:tools-3}|K|^{n-k}=p(n,n-k)\int_{G_{n,n-k}}\int_{K\cap F}\cdots\int_{K\cap F}\,|{\rm conv}(0,x_1,\ldots ,x_{n-k})|^{k}dx_1\ldots dx_{n-k}\,d\nu_{n,n-k}(F).\end{equation}
We will use some basic facts about Sylvester-type functionals. Let $D$ be a convex body in ${\mathbb R}^m$. For every $p>0$ we consider
the normalized $p$-th moment of the expected volume of the random simplex ${\rm conv}(0,x_1,\ldots ,x_m)$, the convex hull of the origin and
$m$ points from $D$, defined by
\begin{equation}\label{eq:tools-4}S_p(D)=\left (\frac{1}{|D|^{m+p}}
\int_D\cdots\int_D|{\rm conv}(0,x_1,\ldots ,x_m)|^pdx_1\cdots
dx_m\right )^{1/p}.\end{equation}
Also, for any Borel probability measure $\nu $ on ${\mathbb R}^m$ we define
\begin{equation}\label{eq:tools-5}S_p(\nu )=\left (\int_{{\mathbb R}^m}\cdots\int_{{\mathbb R}^m}|{\rm conv}(0,x_1,\ldots ,x_m)|^pd\nu (x_1)\cdots
d\nu (x_m)\right )^{1/p}.\end{equation}
Note that $S_p(D)$ is invariant under invertible linear transformations: $S_p(D)=S_p(T(D))$ for every $T\in GL(n)$.
The next fact is well-known and goes back to Blaschke (see e.g. \cite[Proposition 3.5.5]{BGVV-book}).

\begin{lemma}\label{lem:tools-blaschke}Let $\nu $ be a centered Borel probability measure on ${\mathbb R}^m$. Then,
\begin{equation}\label{eq:tools-6}m!\,S_2^2(\nu )=\det ({\rm Cov}(\nu )).\end{equation}
In particular, if $D$ is centered then
\begin{equation}\label{eq:tools-7}S_2^2(D) =\frac{L_D^{2m}}{m!}.\end{equation}
\end{lemma}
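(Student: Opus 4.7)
The plan is to reduce the squared volume of the random simplex to a determinant and take the expectation using the covariance structure of $\nu$.

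\textbf{Step 1: Determinantal identity.} First I would use the elementary fact that
\begin{equation*}
|{\rm conv}(0,x_1,\ldots,x_m)|=\frac{1}{m!}\,\bigl|\det[x_1\,|\cdots|\,x_m]\bigr|,
\end{equation*}
so that
\begin{equation*}
S_2^2(\nu)=\frac{1}{(m!)^2}\int_{\mathbb{R}^m}\!\!\cdots\!\int_{\mathbb{R}^m}\bigl(\det[x_1\,|\cdots|\,x_m]\bigr)^2\,d\nu(x_1)\cdots d\nu(x_m).
\end{equation*}
The task therefore reduces to showing that the integral on the right equals $m!\,\det({\rm Cov}(\nu))$.

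\textbf{Step 2: Leibniz expansion and the effect of centering.} Writing $\det[x_1\,|\cdots|\,x_m]=\sum_{\sigma\in S_m}{\rm sgn}(\sigma)\prod_{i=1}^m(x_i)_{\sigma(i)}$, I would expand the square into a double sum over $\sigma,\tau\in S_m$ and integrate term by term. Since $x_1,\ldots,x_m$ are i.i.d.\ with law $\nu$ and $\nu$ is centered, one has
\begin{equation*}
\int_{\mathbb{R}^m}(x_i)_{\sigma(i)}(x_i)_{\tau(i)}\,d\nu(x_i)={\rm Cov}(\nu)_{\sigma(i),\tau(i)},
\end{equation*}
so that
\begin{equation*}
\int\cdots\int\bigl(\det[x_1\,|\cdots|\,x_m]\bigr)^2\,d\nu(x_1)\cdots d\nu(x_m)=\sum_{\sigma,\tau\in S_m}{\rm sgn}(\sigma){\rm sgn}(\tau)\prod_{i=1}^m {\rm Cov}(\nu)_{\sigma(i),\tau(i)}.
\end{equation*}

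\textbf{Step 3: Collapsing the double sum.} Setting $\pi=\tau\sigma^{-1}$ and re-indexing $j=\sigma(i)$, the product becomes $\prod_{j=1}^m {\rm Cov}(\nu)_{j,\pi(j)}$, and ${\rm sgn}(\sigma){\rm sgn}(\tau)={\rm sgn}(\pi)$. Summing first over $\pi$ (which gives $\det({\rm Cov}(\nu))$ by the Leibniz formula) and then over the $m!$ choices of $\sigma$, the double sum equals $m!\,\det({\rm Cov}(\nu))$. Combining with Step~1 yields the first assertion $m!\,S_2^2(\nu)=\det({\rm Cov}(\nu))$.

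\textbf{Step 4: Specialization to a centered convex body.} For the second assertion, $S_2(D)$ is invariant under $GL(n)$, as is the isotropic constant $L_D$. So I may assume $D$ is isotropic, in which case $|D|=1$, $\nu=\mathbf{1}_D\,dx$ is a centered probability measure, and ${\rm Cov}(\nu)=L_D^2\,I_m$ by the isotropic condition (\ref{isotropic-condition}). Plugging into the first part gives $S_2^2(D)=L_D^{2m}/m!$.

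The only slightly delicate point is the bookkeeping in Step~3 (the cancellation of signs and the fact that exactly $m!$ choices of $\sigma$ produce each $\pi$); everything else is routine. There is no real obstacle, since this is a classical computation of Blaschke recast in modern notation.
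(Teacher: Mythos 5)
Your proof is correct and complete; the paper itself does not prove this lemma but only cites \cite[Proposition 3.5.5]{BGVV-book}, and your Leibniz-expansion computation (reducing the squared determinant to $m!\det({\rm Cov}(\nu))$ via the substitution $\pi=\tau\sigma^{-1}$) is exactly the classical Blaschke argument given there. The reduction of \eqref{eq:tools-7} to the isotropic position via the $GL(m)$-invariance of $S_2$ and of $L_D$ is also the standard route and is valid as written.
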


H\"{o}lder's inequality shows that the function $p\mapsto S_p(D)$ is increasing on $(0,\infty )$. We will need
the next reverse H\"{o}lder inequality.

\begin{lemma}\label{lem:arb-mu-1}There exists an absolute constant $\delta >0$ such that, for every
log-concave probability measure $\nu $ on ${\mathbb R}^m$ and every $p>1$,
\begin{equation}\label{eq:tools-8}S_p(\nu )\ls (\delta p)^m\,S_1(\nu ).\end{equation}
In particular, for every convex body $D$ in ${\mathbb R}^m$ and every $p>1$,
\begin{equation}\label{eq:tools-9}S_p(D)\ls (\delta p)^m\,S_1(D).\end{equation}
\end{lemma}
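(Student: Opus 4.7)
Writing $|{\rm conv}(0,x_1,\ldots,x_m)|=\frac{1}{m!}|\det(x_1,\ldots,x_m)|$, where $x_1,\ldots,x_m$ are viewed as the columns of an $m\times m$ matrix, we have
\begin{equation*}
m!\,S_p(\nu)=\left(\int\cdots\int |\det(x_1,\ldots,x_m)|^p\, d\nu(x_1)\cdots d\nu(x_m)\right)^{1/p},
\end{equation*}
and similarly for $m!\,S_1(\nu)$. The plan is to integrate out the $m$ variables one at a time, each step producing a factor of $Cp$ via the standard reverse H\"{o}lder inequality for seminorms under log-concave measures. The key structural observation is that, for every $0\ls k\ls m-1$, the partial integral
\begin{equation*}
B_k(x_{k+1},\ldots,x_m):=\int\cdots\int |\det(x_1,\ldots,x_m)|\, d\nu(x_1)\cdots d\nu(x_k)
\end{equation*}
is a seminorm in $x_{k+1}$ with $x_{k+2},\ldots,x_m$ held fixed. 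This follows from the multilinearity of $\det$: for every fixed value of the other slots, $|\det|$ is absolutely $1$-homogeneous and (via the triangle inequality for absolute values) subadditive in $x_{k+1}$, and both properties survive integration against $\nu^{\otimes k}$ in the first $k$ variables.

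The one-variable input I would cite is the standard consequence of Borell's lemma: for every log-concave probability measure $\lambda$ on $\mathbb{R}^m$, every seminorm $\|\cdot\|$ on $\mathbb{R}^m$ and every $p\gr 1$,
\begin{equation*}
\left(\int\|y\|^p\, d\lambda(y)\right)^{1/p}\ls C_0 p\int\|y\|\, d\lambda(y),
\end{equation*}
where $C_0>0$ is an absolute constant (see \cite{BGVV-book}). Applied to $\lambda=\nu$ and, for each fixed $(x_{k+2},\ldots,x_m)$, to the seminorm $y\mapsto B_k(y,x_{k+2},\ldots,x_m)$, this yields the pointwise bound
\begin{equation*}
\left(\int B_k(x_{k+1},\ldots,x_m)^p\, d\nu(x_{k+1})\right)^{1/p}\ls C_0 p\, B_{k+1}(x_{k+2},\ldots,x_m).
\end{equation*}

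Introducing $A_k:=\bigl(\int\cdots\int|\det(x_1,\ldots,x_m)|^p\, d\nu(x_1)\cdots d\nu(x_k)\bigr)^{1/p}$ with the convention $A_0=B_0=|\det|$, a short induction on $k$ gives the pointwise estimate $A_k\ls (C_0p)^k B_k$: the inductive step combines the hypothesis with the preceding display to produce
\begin{equation*}
A_{k+1}^p=\int A_k^p\, d\nu(x_{k+1})\ls (C_0p)^{kp}\int B_k^p\, d\nu(x_{k+1})\ls (C_0p)^{(k+1)p}B_{k+1}^p.
\end{equation*}
Taking $k=m$ and dividing by $m!$ yields $S_p(\nu)\ls (C_0p)^m S_1(\nu)$, i.e.~\eqref{eq:tools-8} with $\delta=C_0$. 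The convex-body estimate \eqref{eq:tools-9} then follows by applying the just-established inequality to the log-concave probability measure $\mathbf{1}_D/|D|$ and observing that $S_p(D)=|D|^{-1}S_p(\mathbf{1}_D/|D|)$ for every $p>0$, a routine consequence of the definitions. The only mildly delicate point — and hence the main obstacle — is the verification of the seminorm property of $B_k$; once this is established, the rest is a clean induction on top of a classical reverse H\"{o}lder inequality.
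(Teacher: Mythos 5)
Your argument is correct and is essentially the paper's own proof, spelled out in more detail: the paper likewise reduces $|{\rm conv}(0,x_1,\ldots,x_m)|$ to $\frac{1}{m!}|\det(x_1,\ldots,x_m)|$, observes that the determinant and its partial averages are seminorms in each remaining variable, and peels off the $m$ variables one at a time via the Borell-type reverse H\"{o}lder inequality, paying a factor $\delta p$ per variable. Your explicit induction on the partial integrals $A_k,B_k$ (and the verification that $B_k$ is a seminorm in $x_{k+1}$) just fills in the steps the paper compresses into ``by consecutive applications of Fubini's theorem''.
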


\noindent {\it Proof.} We use the fact that there exists an absolute constant $\delta >0$ with the
following property: if $\nu \in {\cal P}_m$ is a log-concave probability measure then, for any seminorm $u:{\mathbb R}^m\to {\mathbb R}$
and any $q>p\gr 1$,
\begin{equation}\label{eq:tools-10}\left (\int_{{\mathbb R}^m}|u(x)|^qd\nu (x)\right )^{1/q}
\ls \frac{\delta q}{p}\left (\int_{{\mathbb R}^m}|u(x)|^pd\nu (x)\right )^{1/p}.\end{equation}
This is a consequence of Borell's lemma (see e.g. \cite[Theorem 2.4.6]{BGVV-book}). Next, recall that
\begin{equation}\label{eq:tools-11}|{\rm conv}(0,x_1,\ldots ,x_m)|=\frac{1}{m!}|\det (x_1,\ldots ,x_m)|.\end{equation}
The function $u_i:{\mathbb R}^m\to {\mathbb R}$ defined by $x_i\mapsto |{\rm det}(x_1,\ldots ,x_n)|$ for fixed $x_j$ in ${\mathbb R}^m$,
$j\neq i$, is a seminorm, as is the function $v_i:{\mathbb R}^m\to {\mathbb R}$
defined by
\begin{equation}\label{eq:tools-12}x_i\mapsto \int_{{\mathbb R}^m}\cdots\int_{{\mathbb R}^m} |{\rm det}(x_1,\ldots ,x_m)| dx_{i+1}\cdots dx_m\end{equation}
for fixed $x_j$ ($1\ls j<i$) in ${\mathbb R}^m$. By consecutive applications of
Fubini's theorem and of \eqref{eq:tools-10} we obtain \eqref{eq:tools-8}.
\prend

\medskip

The next lemma gives upper bounds for the constants $\gamma_{n,k}=|B_2^n|^{\frac{n-k}{n}}/|B_2^{n-k}|$ and $p(n,n-k)$; both constants appear frequently in the next sections.

\begin{lemma}\label{lem:estimate-constant}For every $1\ls k\ls n-1$ we have
\begin{equation}\label{eq:tools-13}e^{-k/2}<\gamma_{n,k}<1\quad\hbox{and}\quad [\gamma_{n,k}^{-n}p(n,n-k)]^{\frac{1}{k(n-k)}}\simeq \sqrt{n-k}.\end{equation}
\end{lemma}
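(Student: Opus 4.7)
The plan has two parts: first, the two-sided bound on $\gamma_{n,k}$ via Stirling's formula, and second, the asymptotic for $\gamma_{n,k}^{-n}p(n,n-k)$ via an application of the generalized Blaschke--Petkantschin formula (Lemma~\ref{lem:gardner-07}) to the Euclidean ball $K=B_2^n$.

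\medskip

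For the bounds on $\gamma_{n,k}$, use $\omega_n=\pi^{n/2}/\Gamma(n/2+1)$ to write $\gamma_{n,k}=\Gamma((n-k)/2+1)/\Gamma(n/2+1)^{(n-k)/n}$. The upper bound $\gamma_{n,k}<1$ is equivalent to the strict monotonicity of the sequence $n\mapsto\omega_n^{1/n}$, a classical consequence of Stirling together with the recursion $\omega_n/\omega_{n-2}=2\pi/n$. For the lower bound, the plan is to first prove the stronger inequality $\gamma_{n,k}\gr ((n-k)/n)^{(n-k)/2}$ (equivalent to $\omega_n^{1/n}\sqrt{n}$ being non-decreasing in $n$): then $\gamma_{n,k}>e^{-k/2}$ follows by setting $t=k/(n-k)$ and using $e^t>1+t$, since $((n-k)/n)^{(n-k)/2}=(1+t)^{-(n-k)/2}>e^{-(n-k)t/2}=e^{-k/2}$. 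The stronger inequality is verified via Stirling's expansion $\log\Gamma(x+1)=x\log x-x+\tfrac12\log(2\pi x)+O(1/x)$, which after simplification yields
\[
\log\gamma_{n,k}=\tfrac{n-k}{2}\log\tfrac{n-k}{n}+\tfrac12\log\!\bigl[(\pi n)^{k/n}(n-k)/n\bigr]+O\!\bigl(\tfrac{1}{n-k}\bigr),
\]
where the second term is non-negative on $[0,n-1]$ (the function $k\mapsto(\pi n)^{k/n}(n-k)/n$ is concave on $[0,n]$ and $\gr 1$ at the endpoints $k=0$ and $k=n-1$), and the strictly positive quantity $f(k):=(n-k)\log((n-k)/n)+k\gr 0$ absorbs the Stirling remainder uniformly in $k$.

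\medskip

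For the asymptotic, apply \eqref{eq:tools-3} to $K=B_2^n$. Since $B_2^n\cap F$ is isometric to $B_2^{n-k}$ for every $F\in G_{n,n-k}$ and the inner integral equals $|B_2^n\cap F|^n\,S_k(B_2^n\cap F)^k$ by the definition of $S_k$, the Blaschke--Petkantschin formula collapses to the clean identity
\[
\omega_n^{n-k}=p(n,n-k)\,\omega_{n-k}^n\,S_k(B_2^{n-k})^k,
\]
equivalently $\gamma_{n,k}^{-n}p(n,n-k)=S_k(B_2^{n-k})^{-k}$, so that
\[
\bigl[\gamma_{n,k}^{-n}p(n,n-k)\bigr]^{1/(k(n-k))}=S_k(B_2^{n-k})^{-1/(n-k)}.
\]
It remains to show $S_k(B_2^{n-k})^{1/(n-k)}\simeq 1/\sqrt{n-k}$.

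\medskip

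For the lower bound on $S_k(B_2^{n-k})$, Blaschke's identity (Lemma~\ref{lem:tools-blaschke}) gives $S_2(B_2^{n-k})=L_{B_2^{n-k}}^{n-k}/\sqrt{(n-k)!}$; a direct computation shows $L_{B_2^m}=\omega_m^{-1/m}/\sqrt{m+2}$, which is bounded between absolute constants, so Stirling yields $S_2(B_2^{n-k})^{1/(n-k)}\simeq 1/\sqrt{n-k}$. This transfers to $S_k$ via H\"older's inequality $S_k\gr S_2$ when $k\gr 2$, and via Lemma~\ref{lem:arb-mu-1} (applied with $p=2$) when $k=1$. For the upper bound, pass to polar coordinates on each factor $B_2^{n-k}$, yielding the exact formula
\[
S_k(B_2^{n-k})^k=\Bigl(\tfrac{n-k}{n}\Bigr)^{n-k}\!\frac{1}{\omega_{n-k}^k((n-k)!)^k}\int_{(S^{n-k-1})^{n-k}}\!|\det(\theta_1,\ldots,\theta_{n-k})|^k\,d\sigma^{n-k};
\]
Hadamard's inequality bounds the determinant by $1$, and applying Stirling to $\omega_{n-k}^{1/(n-k)}\simeq\sqrt{2\pi e/(n-k)}$ and $((n-k)!)^{1/(n-k)}\simeq(n-k)/e$ then gives $S_k(B_2^{n-k})^{1/(n-k)}\ls C/\sqrt{n-k}$. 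The main technical obstacle is the uniform control of Stirling errors in the lower bound on $\gamma_{n,k}$, especially when $k$ is close to $n-1$; once the identity above is recognized, the estimation of $S_k(B_2^{n-k})$ is essentially mechanical.
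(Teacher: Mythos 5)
Your proof is correct, but it routes the two assertions differently from the paper. For the main estimate $[\gamma_{n,k}^{-n}p(n,n-k)]^{1/(k(n-k))}\simeq\sqrt{n-k}$, the paper's actual proof is a direct computation with Stirling's formula and the Glaisher--Kinkelin asymptotics for $1\cdot 2^2\cdots m^m$; your identity $\gamma_{n,k}^{-n}p(n,n-k)=S_k(B_2^{n-k})^{-k}$ obtained by plugging $K=B_2^n$ into the Blaschke--Petkantschin formula is precisely the alternative route the paper records in the remark following the lemma, where the needed direction $[\gamma_{n,k}^{-n}p(n,n-k)]^{1/(k(n-k))}\ls c_0\sqrt{n-k}$ is derived exactly as you do, from $S_k\gr S_2$, Blaschke's identity $S_2^2=L_{B_2^{n-k}}^{2(n-k)}/(n-k)!$, and Lemma~\ref{lem:arb-mu-1} for $k=1$. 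What your write-up adds is a genuine proof of the reverse inequality (via polar coordinates and Hadamard's bound $|\det(\theta_1,\ldots,\theta_{n-k})|\ls 1$), which the paper only asserts ``can be obtained from similar computations''; this is a clean and worthwhile supplement. For the first assertion $e^{-k/2}<\gamma_{n,k}<1$, the paper simply cites Koldobsky--Lifshits, whose argument rests on the log-convexity of $\Gamma$: writing $\frac{n-k}{2}=\frac{k}{n}\cdot 0+\frac{n-k}{n}\cdot\frac{n}{2}$ and using $\Gamma\left(\frac{n-k}{2}+1\right)\ls\Gamma(1)^{k/n}\,\Gamma\left(\frac{n}{2}+1\right)^{(n-k)/n}$ gives $\gamma_{n,k}<1$ in one line, and a companion interpolation gives the lower bound; your Stirling-with-remainder approach reaches the same conclusion but forces you into the delicate uniform error control you yourself flag (the tight case is small $k$ and large $n$, where $f(k)\simeq k^2/(2(n-k))$ barely dominates the $O(1/n)$ remainder --- it does, but only after you also invoke the positivity of the middle term).

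One small slip: the function $k\mapsto(\pi n)^{k/n}(n-k)/n$ is \emph{not} concave on $[0,n]$ (it is a convex exponential times a decreasing linear factor, and its second derivative changes sign). What is concave is its logarithm $k\mapsto\frac{k}{n}\log(\pi n)+\log\frac{n-k}{n}$, and since that logarithm is $\gr 0$ at $k=0$ and at $k=n-1$, concavity of the logarithm yields the non-negativity you need on all of $[0,n-1]$. The conclusion stands; just transfer the concavity claim to the logarithm.
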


\noindent {\it Proof.} Recall that
\begin{equation}\label{eq:tools-14}\gamma_{n,k}:=\omega_n^{\frac{n-k}{n}}/\omega_{n-k}.\end{equation}
Using the log-convexity of the Gamma function one can check that $e^{-k/2}<\gamma_{n,k}<1$. A proof appears in \cite[Lemma 2.1]{Koldobsky-Lifshits-2000}.

In order to give an upper bound for $p(n,n-k)$ we start from the fact
that $\omega_s=\pi^{\frac{s}{2}}/\Gamma\left (\frac{s}{2}+1\right )$ and use Stirling's approximation. Recall
that
\begin{align}\label{eq:stirling-1}
p(n,n-k) &= ((n-k)!)^{k}\frac{(n\omega_n)\cdots ((k+1)\omega_{k+1})}{((n-k)\omega_{n-k})\cdots (2\omega_2)\omega_1}\\
\nonumber &= ((n-k)!)^k\binom{n}{k}\frac{\prod_{s=k+1}^n\frac{\pi^{s/2}}{\Gamma \left(\frac{s}{2}+1\right )}}
{\prod_{s=1}^{n-k}\frac{\pi^{s/2}}{\Gamma \left(\frac{s}{2}+1\right )}}\\
\nonumber &= ((n-k)!)^k\binom{n}{k}\pi^{\frac{k(n-k)}{2}}\frac{\prod_{s=1}^{n-k}\Gamma \left(\frac{s}{2}+1\right )}{\prod_{s={k+1}}^n\Gamma \left(\frac{s}{2}+1\right )},
\end{align}
where we have used the identity
\begin{equation}\label{eq:stirling-2}\frac{1}{2}\sum_{s=k+1}^ns-\frac{1}{2}\sum_{s=1}^{n-k}s=\frac{1}{4}(n(n+1)-k(k+1)-(n-k)(n-k+1))=\frac{1}{2}k(n-k).\end{equation}
Using the estimate
\begin{equation}\label{eq:stirling-3}\left(\frac{s}{2e}\right)^{\frac{s}{2}}\sqrt{2\pi s}\ls\Gamma \left(\frac{s}{2}+1\right )\ls\left(\frac{s}{2e}\right)^{\frac{s}{2}}\sqrt{2\pi s}\,e^{\frac{1}{6s}}\ls\left(\frac{s}{2e}\right)^{\frac{s}{2}}\sqrt{2\pi s}\,e^{\frac{1}{6}}\end{equation}
we get
\begin{equation}\label{eq:stirling-4}p(n,n-k)\ls ((n-k)!)^{k}(2\pi e)^{\frac{k(n-k)}{2}}e^{\frac{n-k}{6}}\binom{n}{k}^{1/2}\frac{\prod_{s=1}^ks^{\frac{s}{2}}\prod_{s=1}^{n-k}s^{\frac{s}{2}}}
{\prod_{s=1}^ns^{\frac{s}{2}}}.\end{equation}
Let
\begin{equation}\label{eq:stirling-5}t_m=1\cdot 2^2\cdot 3^3\cdots \cdot m^m.\end{equation}
It is known that
\begin{equation}\label{eq:stirling-6}t_m\sim Am^{\frac{m^2}{2}+\frac{m}{2}+\frac{1}{12}}e^{-\frac{m^2}{4}}\end{equation}
as $m\to\infty $, where $A>0$ is an absolute constant (the Glaisher-Kinkelin constant, see e.g. \cite{Finch-book}). Note that
\begin{equation}\label{eq:stirling-7}\gamma_{n,k}^{-n}=\frac{\omega_{n-k}^n}{\omega_n^{n-k}}=\frac{\Gamma\left (\frac{n}{2}+1\right )^{n-k}}{\Gamma\left (\frac{n-k}{2}+1\right )^n}
\ls \left (\frac{n}{n-k}\right )^{\frac{n(n-k)}{2}}\frac{(\pi n)^{\frac{n-k}{2}}e^{\frac{n-k}{6}}}{(\pi (n-k))^{\frac{n}{2}}}
\ls e^{\frac{n-k}{6}}\left (\frac{n}{n-k}\right )^{\frac{(n+1)(n-k)}{2}}.\end{equation}Using the fact that $n^2=k^2+(n-k)^2+2k(n-k)$ we get
\begin{align}\label{eq:stirling-8}\gamma_{n,k}^{-\frac{n}{k(n-k)}}\left (\frac{t_kt_{n-k}}{t_n}\right )^{\frac{1}{2k(n-k)}} &\ls \frac{c_1}{\sqrt{n}}\left (\frac{k}{n}\right )^{\frac{k+1}{4(n-k)}}
\left (\frac{n-k}{n}\right )^{\frac{n-k+1}{4k}}\left (\frac{n}{n-k}\right )^{\frac{n+1}{2k}}\\
\nonumber &\ls \frac{c_1}{\sqrt{n}}\left (\frac{k}{n}\right )^{\frac{k+1}{4(n-k)}}\left (\frac{n}{n-k}\right )^{\frac{n+k+1}{4k}}\\
\nonumber &\ls \frac{c_1}{\sqrt{n}}\left (\frac{k}{n}\right )^{\frac{k+1}{4(n-k)}}\left (\frac{n}{n-k}\right )^{\frac{n-k}{2k}}\left (\frac{n}{n-k}\right )^{\frac{2k+1}{4k}}\\
\nonumber &\ls \frac{c_2}{\sqrt{n}}\cdot \frac{\sqrt{n}}{\sqrt{n-k}}=\frac{c_2}{\sqrt{n-k}}.\end{align}
Since
\begin{equation}\label{eq:stirling-9}\left [((n-k)!)^{k}(2\pi e)^{\frac{k(n-k)}{2}}e^{\frac{n-k}{6}}\binom{n}{k}^{1/2}\right ]^{\frac{1}{k(n-k)}}\ls c_3(n-k),\end{equation}
we see that
\begin{equation}\label{eq:stirling-10}[\gamma_{n,k}^{-n}p(n,n-k)]^{\frac{1}{k(n-k)}}\ls c_0\sqrt{n-k}\end{equation}
for every $1\ls k\ls n-1$, where $c_0>0$ is an absolute constant. The reverse inequality can be obtained from similar computations,
but we will not need it in the sequel. \prend

\begin{remark}\rm An alternative way to give an upper bound for $p(n,n-k)$ is to start by rewriting \eqref{eq:tools-3} in the form
\begin{equation}\label{eq:alt-1}|K|^{n-k}=p(n,n-k)\int_{G_{n,n-k}}|K\cap F|^{n}[S_{k}(K\cap F)]^{k}\,d\nu_{n,n-k}(F).\end{equation}
In particular, setting $K=B_2^n$ we see that if $k\gr 2$ then
\begin{align}\label{eq:alt-2}\omega_n^{n-k} &=p(n,n-k)\omega_{n-k}^n[S_k(B_2^{n-k})]^k\gr p(n,n-k)\omega_{n-k}^n[S_2(B_2^{n-k})]^k\\
\nonumber &\gr p(n,n-k)\omega_{n-k}^n\left (\frac{L_{B_2^{n-k}}}{\sqrt{n-k}}\right )^{k(n-k)}\gr p(n,n-k)\omega_{n-k}^n\left (\frac{c_1}{\sqrt{n-k}}\right )^{k(n-k)}\end{align}
where $c_1>0$ is an absolute constant, which implies that
\begin{equation}\label{eq:alt-3}p(n,n-k)\ls \gamma_{n,k}^n(c_0\sqrt{n-k})^{k(n-k)}.\end{equation}
where $c_0=c_1^{-1}$. For the case $k=1$ we can use the fact that $S_1(K\cap F)\gr \delta^{-(n-1)}S_2(K\cap F)$ for
every $F\in G_{n,n-1}$, and then continue as above. The final estimate is exactly the same as in Lemma \ref{lem:estimate-constant}:
\begin{equation}\label{eq:alt-4}[\gamma_{n,k}^{-n}p(n,n-k)]^{\frac{1}{k(n-k)}}\ls c_0\sqrt{n-k},\end{equation}
and this is what we use in this article. However, the proof of Lemma \ref{lem:estimate-constant} shows that this
estimate is tight for all $n$ and $k$; one cannot expect something better.
\end{remark}

For the proof of Theorem \ref{th:intro-arb-2} we will additionally use the next theorem of Dann, Paouris and Pivovarov from \cite{Dann-Paouris-Pivovarov-2015}.

\begin{theorem}[Dann-Paouris-Pivovarov]\label{th:dann-1}Let $u$ be a non-negative,
bounded integrable function on ${\mathbb R}^n$ with $\|u\|_1>0$. For every $1\ls k\ls n-1$ we have
\begin{equation}\label{eq:dann-1}\int_{G_{n,n-k}}\frac{1}{\|u|_F\|_{\infty }^k}\left (\int_Fu(x)dx\right )^nd\nu_{n,n-k}(F)
\ls \gamma_{n,k}^{-n}\left (\int_{{\mathbb R}^n}u(x)dx\right )^{n-k}.\end{equation}
\end{theorem}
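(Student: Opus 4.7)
The plan is to prove this theorem as a functional strengthening of Grinberg's inequality
$$\int_{G_{n,n-k}}|L\cap F|^{n}\,d\nu_{n,n-k}(F)\ls \gamma_{n,k}^{-n}|L|^{n-k},$$
(valid for every compact $L\subset\mathbb R^{n}$), by combining the layer-cake representation of $u$ with the generalized Blaschke--Petkantschin formula of Lemma \ref{lem:gardner-07}.

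First, by a standard approximation argument, I may assume that $u$ is continuous and compactly supported. Write $u(x)=\int_{0}^{\infty}\mathbf 1_{K_t}(x)\,dt$, where $K_t=\{x:u(x)\gr t\}$, and set $M=\|u\|_{\infty}$, $M_F=\|u|_F\|_{\infty}$. The layer-cake identities
$\int u=\int_{0}^{M}|K_t|\,dt$ and $\int_F u=\int_{0}^{M_F}|K_t\cap F|\,dt$
reduce everything to estimates on the family $\{K_t\}$ of superlevel sets.

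Next, for each $t$, Grinberg's inequality applied to $K_t$ gives $\bigl(\int_{G_{n,n-k}}|K_t\cap F|^n\,d\nu\bigr)^{1/n}\ls \gamma_{n,k}^{-1}|K_t|^{(n-k)/n}$. Combining this with Minkowski's integral inequality (the triangle inequality for the $L^n(G_{n,n-k},d\nu)$-norm) and H\"older's inequality in the $t$-variable produces the global estimate
$$\Bigl(\int_{G_{n,n-k}}\bigl(\textstyle\int_F u\bigr)^n\,d\nu\Bigr)^{1/n}\ls \gamma_{n,k}^{-1}\int_{0}^{M}|K_t|^{(n-k)/n}\,dt \ls \gamma_{n,k}^{-1}\,M^{k/n}\Bigl(\int u\Bigr)^{(n-k)/n},$$
so that $\int_{G_{n,n-k}}(\int_F u)^{n}\,d\nu\ls \gamma_{n,k}^{-n}\,M^{k}\,(\int u)^{n-k}$.

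The main obstacle, and indeed the essence of the theorem, is to replace the global factor $M^k=\|u\|_\infty^k$ in this last estimate by the local quantity $M_F^k$ inside the integral over $G_{n,n-k}$. My plan is to obtain this refinement by applying Lemma \ref{lem:gardner-07} directly to $f(x_1,\ldots,x_{n-k})=\prod_{i=1}^{n-k}u(x_i)$ with $s=n-k$, giving
$$\Bigl(\int u\Bigr)^{n-k}=p(n,n-k)\int_{G_{n,n-k}}\Bigl(\int_F u\Bigr)^{n-k}S_k(\mu_F)^{k}\,d\nu(F),$$
where $\mu_F$ is the probability measure on $F$ with density $u|_F/\int_F u$. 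It then suffices to establish the Sylvester-type lower bound $S_k(\mu_F)^k\gr C_{n,k}^{k}\cdot(\int_F u)^{k}/\bigl(M_F^{k}\cdot |K_0\cap F|^{k}\bigr)^{?}$, with $C_{n,k}=S_k(B_2^{n-k})$, since the identity $p(n,n-k)\,\omega_{n-k}^{n}\,C_{n,k}^{k}=\omega_{n}^{n-k}$ (which follows from applying the Blaschke--Petkantschin formula to the Euclidean ball) rearranges to $p(n,n-k)\,C_{n,k}^{k}=\gamma_{n,k}^{n}$ and delivers exactly the claimed constant $\gamma_{n,k}^{-n}$. The delicate technical point is precisely this Sylvester-type lower bound on $S_k(\mu_F)$ as a function of $M_F$ and $\int_F u$; I expect it to come from a rearrangement argument that replaces $u|_F$ by a radially decreasing function on $F$ with the same supremum and integral (since neither side of the target inequality increases under such rearrangement), reducing matters to the case $u|_F=M_F\,\mathbf 1_{rB_F}$, which is handled by the sharp Busemann inequality identifying the Euclidean ball as the extremizer of $S_k$.
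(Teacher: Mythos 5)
First, a point of reference: the paper does not prove Theorem \ref{th:dann-1} at all — it is quoted from the preprint of Dann, Paouris and Pivovarov, with only the remark that the proof ``combines Blaschke--Petkantschin formulas with rearrangement inequalities.'' Your outline follows exactly that strategy, and the reduction you perform is correct: applying Lemma \ref{lem:gardner-07} with $s=n-k$ to $f(x_1,\dots ,x_{n-k})=\prod_i u(x_i)$ gives
\[
\Bigl(\int_{{\mathbb R}^n} u\Bigr)^{n-k}=p(n,n-k)\int_{G_{n,n-k}}\Bigl(\int_F u\Bigr)^{n-k}[S_k(\mu_F)]^k\,d\nu_{n,n-k}(F),
\]
and since $p(n,n-k)\,[S_k(B_2^{n-k})]^k=\gamma_{n,k}^{n}$ (the case $K=B_2^n$ of \eqref{eq:alt-1}), the theorem is equivalent to the pointwise Sylvester-type lower bound
\[
[S_k(\mu_F)]^k\;\gr\;[S_k(B_2^{n-k})]^k\Bigl(\frac{\int_F u}{\|u|_F\|_{\infty }}\Bigr)^k
\]
for a.e.\ $F$, i.e.\ to the statement that among probability densities on $F\cong {\mathbb R}^{n-k}$ with prescribed sup-norm, the normalized indicator of a Euclidean ball minimizes ${\mathbb E}\,|{\rm conv}(0,X_1,\dots ,X_{n-k})|^k$. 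Your constant bookkeeping is right (the displayed bound with the stray $|K_0\cap F|$ factor and the question mark should simply read as above), and the preliminary estimate with the global $\|u\|_{\infty }$ is correct but plays no role in the end.

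The genuine gap is that this Sylvester lower bound — which is the entire content of the theorem — is only conjectured, and the sketch you give for it would not go through as written. The monotonicity you invoke, that $S_k(\mu_F)$ does not increase when $u|_F$ is replaced by its symmetric decreasing rearrangement, is \emph{not} an instance of the Rogers/Brascamp--Lieb--Luttinger inequality: that inequality \emph{increases} such multilinear integrals under rearrangement, i.e.\ it points in the wrong direction here. Obtaining the decrease requires the specific structure of $|{\rm conv}(0,x_1,\dots ,x_m)|^k$ (convexity of its sublevel sets in each variable, Steiner-symmetrization/shadow-system arguments), and this is precisely the main technical work of \cite{Paouris-Pivovarov-2012} and \cite{Dann-Paouris-Pivovarov-2015}. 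Moreover, even after rearrangement one is left with a general radially decreasing density, not $M_F\,{\bf 1}_{rB_F}$; passing from the former to the latter is a second nontrivial step, and the classical Busemann random simplex inequality concerns uniform measures on convex bodies, so it cannot be quoted directly for densities. (A smaller issue: the reduction to continuous compactly supported $u$ needs care, since $\|u|_F\|_{\infty }$ and $\int_F u$ are restrictions to null sets and are not stable under $L^1({\mathbb R}^n)$-approximation.) As it stands, your argument is a correct reduction of Theorem \ref{th:dann-1} to the density version of Busemann's random simplex inequality, but it does not prove that inequality.
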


The proof of this fact combines Blaschke-Petkantschin formulas with rearrangement inequalities, and
develops ideas that started in \cite{Paouris-Pivovarov-2012}.

\medskip

Finally, we use Grinberg's inequality for the dual affine quermassintegrals (introduced by Lutwak, see \cite{Lutwak-1984}
and \cite{Lutwak-1988}) of a convex body $K$ in ${\mathbb R}^n$. We
use the normalization of \cite{Dafnis-Paouris-2012}: we assume that the volume of $K$ is equal to $1$ and we set
\begin{equation}\label{eq:low-dim-slicing-1}\tilde{\Phi}_{[k]}(K)=\left(\int_{G_{n,n-k}}|K\cap F|^n
d\nu_{n,n-k}(F)\right)^{\frac{1}{kn}}\end{equation}
for every $1\ls k\ls n-1$. One can extend the definition to bounded Borel subsets of ${\mathbb R}^n$.
Grinberg \cite{Grinberg-1990} proved the following.

\begin{theorem}[Grinberg]\label{th:grinberg}Let $K$ be a compact set of volume $1$ in ${\mathbb R}^n$. For any $1\ls k\ls n-1$ and $T\in SL(n)$ we have
\begin{equation}\label{eq:grinberg-1}\tilde{\Phi }_{[k]}(K)=\tilde{\Phi }_{[k]}(T(K)).\end{equation}
Moreover,
\begin{equation}\label{eq:grinberg-2}\tilde{\Phi }_{[k]}(K)\ls \tilde{\Phi }_{[k]}(\overline{B}_2^n),\end{equation}
where $\overline{B}_2^n$ is the Euclidean ball of volume $1$.
\end{theorem}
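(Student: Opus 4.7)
The plan is to derive both parts of Theorem~\ref{th:grinberg} from a single Blaschke--Petkantschin representation of the functional $I(K):=\int_{G_{n,n-k}}|K\cap F|^n\,d\nu_{n,n-k}(F)$.

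\smallskip

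\textbf{Step 1 (invariant representation).} I would first rewrite $|K\cap F|^n = |K\cap F|^k\int_{(K\cap F)^{n-k}}dx_1\cdots dx_{n-k}$. Then Lemma~\ref{lem:gardner-07}, applied \emph{backwards} with $s=n-k$ to the function $f(x_1,\ldots,x_{n-k})=\mathbf{1}_K(x_1)\cdots\mathbf{1}_K(x_{n-k})\,|K\cap E_x|^k\,/\,|{\rm conv}(0,x_1,\ldots,x_{n-k})|^k$, where $E_x={\rm span}(x_1,\ldots,x_{n-k})$, cancels the weight appearing in \eqref{eq:tools-1} and yields
\begin{equation*}
I(K)=\frac{1}{p(n,n-k)}\int_{K^{n-k}}\frac{|K\cap E_x|^k}{|{\rm conv}(0,x_1,\ldots,x_{n-k})|^k}\,dx_1\cdots dx_{n-k}.
\end{equation*}

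\smallskip

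\textbf{Step 2 (affine invariance).} Given $T\in SL(n)$, the substitution $x_i=T^{-1}y_i$ has unit Jacobian on $(\mathbb{R}^n)^{n-k}$, and $x_i\in K$ if and only if $y_i\in T(K)$. Both $|K\cap E_x|$ and $|{\rm conv}(0,x_1,\ldots,x_{n-k})|$ are $(n-k)$-dimensional volumes of sets lying in the common random subspace $E_x$, so under $T$ they scale by the same Jacobian $|\det(T|_{E_x})|$, whose $k$-th powers cancel in the ratio. This gives $I(T(K))=I(K)$, proving \eqref{eq:grinberg-1}.

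\smallskip

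\textbf{Step 3 (comparison with the ball).} By Step~2 it suffices to show $I(K)\ls I(\overline{B}_2^n)$ for every compact set of volume~$1$. I would argue this by Schwarz symmetrization of $K$ about an axis through the origin: for every $F\in G_{n,n-k}$ containing the axis, Fubini together with the one-dimensional rearrangement inequality give $|K\cap F|\ls |K^*\cap F|$, and iterating (or averaging over axes) drives $K$ to the Euclidean ball of the same volume. Equivalently, one can quote Busemann's random simplex inequality, which asserts exactly \eqref{eq:grinberg-2}.

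\smallskip

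\textbf{Main obstacle.} Steps~1 and~2 are essentially formal bookkeeping; the real difficulty is Step~3. The invariant representation from Step~1 is \emph{not} obviously monotone under Schwarz symmetrization, since the ratio $|K\cap E_x|^k/|{\rm conv}(0,x_1,\ldots,x_{n-k})|^k$ mixes the radial information of $K$ with the geometric configuration of the $x_i$'s in a delicate way. One therefore has to perform the rearrangement on the \emph{original} Grassmannian functional $I(K)$ and invoke the affine invariance only to normalize the problem; the monotonicity itself rests on classical slicing arguments in the spirit of Busemann's intersection inequality.
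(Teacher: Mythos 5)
First, a point of reference: the paper does not actually prove this statement; it quotes it from Grinberg \cite{Grinberg-1990} and notes that the extension to compact sets follows by inspection of his argument (see also \cite{Gardner-2007}). Measured against the classical proofs, your Steps 1 and 2 are correct and are essentially the standard argument for \eqref{eq:grinberg-1}: applying Lemma \ref{lem:gardner-07} to the non-negative (unbounded, but monotone convergence handles this) integrand $\prod_i\mathbf{1}_K(x_i)\,|K\cap E_x|^k/|{\rm conv}(0,x_1,\ldots,x_{n-k})|^k$ yields the invariant representation, and the Jacobian cancellation under $T\in SL(n)$ is exactly right, since both $|K\cap E_x|$ and $|{\rm conv}(0,x_1,\ldots,x_{n-k})|$ are $(n-k)$-dimensional volumes inside the same subspace $E_x$.

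The gap is in Step 3, and it is genuine. The pointwise claim that Schwarz symmetrization about an axis $\ell$ gives $|K\cap F|\ls|K^{\ast}\cap F|$ for every $F\supseteq\ell$ is false: take $n=3$, $k=1$, $\ell$ the $z$-axis and $K=[-1,1]\times[-\epsilon,\epsilon]\times[0,1]$; for $F=\{y=0\}$ one has $|K\cap F|=2$, while every horizontal slice of $K^{\ast}$ is a disc of radius $2\sqrt{\epsilon/\pi}$, so $|K^{\ast}\cap F|=4\sqrt{\epsilon/\pi}\to 0$. The inequality \eqref{eq:grinberg-2} holds only on average over $G_{n,n-k}$, and the $n$-th power is essential, so no section-by-section monotonicity argument can work. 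Likewise, Busemann's random simplex inequality does not ``assert exactly'' \eqref{eq:grinberg-2}, but it is the right ingredient, and the correct deduction passes through the paper's own identity \eqref{eq:alt-1}: from $|K|^{n-k}=p(n,n-k)\int_{G_{n,n-k}}|K\cap F|^n[S_k(K\cap F)]^k\,d\nu_{n,n-k}(F)$ together with the random simplex inequality applied inside each $F$ to the compact (not necessarily convex) section $K\cap F$ --- namely $S_k(K\cap F)\gr S_k(B_2^{n-k})$, in Pfiefer's form for compact sets --- one gets $1=|K|^{n-k}\gr p(n,n-k)[S_k(B_2^{n-k})]^k\,\tilde{\Phi}_{[k]}(K)^{kn}$, with equality when $K=\overline{B}_2^n$, which is \eqref{eq:grinberg-2}. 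So your proposal correctly isolates where the difficulty lies but does not resolve it; replacing Step 3 by this reduction (or by an actual proof of the random simplex inequality) would complete the argument.
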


We can use Grinberg's theorem for compact sets; this can be seen by inspection of his argument (for this more
general form see also \cite[Section 7]{Gardner-2007}). Direct computation and Lemma \ref{lem:estimate-constant} show that
\begin{equation}\label{eq:grinberg-3}\tilde{\Phi }_{[k]}(\overline{B}_2^n)
=\left (\frac{\omega_{n-k}^n}{\omega_n^{n-k}}\right )^{\frac{1}{kn}}=\gamma_{n,k}^{-1/k}\ls \sqrt{e}.\end{equation}

\section{Measure estimates for lower dimensional sections}

In this section we prove Theorem \ref{th:intro-arb-1} and Theorem \ref{th:intro-arb-2}.

\medskip

\noindent {\bf Proof of Theorem \ref{th:intro-arb-1}.} Let $\mu $ be a Borel measure with a bounded locally integrable non-negative density $g$
on ${\mathbb R}^n$. We consider a convex body $K$ in ${\mathbb R}^n$ with $0\in {\rm int}(K)$, and fix $1\ls k\ls n-1$.
Applying Lemma \ref{lem:gardner-07} with $s=n-k$ for the function $f(x_1,\ldots ,x_{n-k})=\prod_{i=1}^{n-k}g(x_i){\bf 1}_{K}(x_i)$ we get
\begin{align}\label{eq:arb-mu-1}&\mu (K)^{n-k} = \prod_{i=1}^{n-k}\int_Kg(x_i)dx=\int_{{\mathbb R}^n}\cdots \int_{{\mathbb R}^n}f(x_1,\ldots ,x_{n-k})dx_1\ldots dx_{n-k}\\
\nonumber &=p(n,n-k)\int_{G_{n,n-k}}\int_{K\cap F}\cdots\int_{K\cap F}g(x_1)\cdots g(x_{n-k})\,|{\rm conv}(0,x_1,\ldots ,x_{n-k})|^{k}dx_1\ldots dx_{n-k}\,d\nu_{n,n-k}(F)\\
\nonumber &\ls p(n,n-k)\int_{G_{n,n-k}}\int_{K\cap F}\cdots\int_{K\cap F}g(x_1)\cdots g(x_{n-k})\,|K\cap F|^{k}dx_1\ldots dx_{n-k}\,d\nu_{n,n-k}(F)\\
\nonumber &= p(n,n-k)\int_{G_{n,n-k}}|K\cap F|^k\mu (K\cap F)^{n-k}\,d\nu_{n,n-k}(F)\\
\nonumber &\ls \max_{F\in G_{n,n-k}}\mu (K\cap F)^{n-k}\cdot p(n,n-k)\int_{G_{n,n-k}}|K\cap F|^k\,d\nu_{n,n-k}(F).\end{align}
In order to estimate the last integral, note that if $\overline{K}=|K|^{-\frac{1}{n}}K$ then
\begin{align}\label{eq:arb-mu-2}\int_{G_{n,n-k}}|K\cap F|^k\,d\nu_{n,n-k}(F) &=|K|^{\frac{k(n-k)}{n}}\int_{G_{n,n-k}}|\overline{K}\cap F|^k\,d\nu_{n,n-k}(F)\\
\nonumber &\ls |K|^{\frac{k(n-k)}{n}}\left (\int_{G_{n,n-k}}|\overline{K}\cap F|^n\,d\nu_{n,n-k}(F)\right )^{\frac{k}{n}}\\
\nonumber &\ls |K|^{\frac{k(n-k)}{n}}\left (\int_{G_{n,n-k}}|\overline{B}_2^n\cap F|^n\,d\nu_{n,n-k}(F)\right )^{\frac{k}{n}}\\
\nonumber &\ls \gamma_{n,k}^{-n}|K|^{\frac{k(n-k)}{n}}
\end{align}
by Theorem \ref{th:grinberg} and \eqref{eq:grinberg-3}. Taking into account Lemma \ref{lem:estimate-constant} we see that
\begin{equation}\label{eq:arb-mu-6}\mu (K)^{n-k}\ls \left (c_0\sqrt{n-k}\right )^{k(n-k)}\max_{F\in G_{n,n-k}}\mu (K\cap F)^{n-k}|K|^{\frac{k(n-k)}{n}},\end{equation}
and the result follows. \prend

\medskip

We pass to the proof of Theorem \ref{th:intro-arb-2}. Let $\mu $ be a measure on ${\mathbb R}^n$ with a bounded locally integrable density $g$. For any $1\ls k\ls n-1$ and any convex body $K$ in ${\mathbb R}^n$ we would like to give upper and lower bounds for $\mu (K)$ in terms of the measures $\mu (K\cap F)$, $F\in G_{n,n-k}$. A lower bound can be given without any further assumption on $g$. At this point we use Theorem \ref{th:dann-1}.

\begin{proposition}\label{prop:bp-arb-1}Let $g$ be a bounded locally integrable non-negative function on ${\mathbb R}^n$ and let $\mu $ be the measure on ${\mathbb R}^n$
with density $g$. For every compact set $D$ in ${\mathbb R}^n$ we have
\begin{equation}\label{eq:bp-arb-1}\int_{G_{n,n-k}}\mu (D\cap F)^nd\nu_{n,n-k}(F)
\ls \gamma_{n,k}^{-n}\|g\|_{\infty }^k\mu (D)^{n-k}.\end{equation}
\end{proposition}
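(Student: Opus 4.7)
The plan is to apply Theorem \ref{th:dann-1} (Dann--Paouris--Pivovarov) directly, with the test function $u=g\cdot \mathbf{1}_D$. Since $D$ is compact and $g$ is bounded and locally integrable, $u$ is bounded and integrable; the case $\mu(D)=0$ is trivial, so we may assume $\|u\|_1=\mu(D)>0$, and the hypotheses of Theorem \ref{th:dann-1} are met.

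With this choice, for each $F\in G_{n,n-k}$ the restriction $u|_F$ satisfies
\begin{equation*}
\int_F u(x)\,dx=\int_{D\cap F}g(x)\,dx=\mu(D\cap F),
\end{equation*}
while $\int_{\mathbb R^n}u(x)\,dx=\mu(D)$ and, trivially, $\|u|_F\|_\infty\ls \|g\|_\infty$. Substituting into \eqref{eq:dann-1} gives
\begin{equation*}
\int_{G_{n,n-k}}\frac{\mu(D\cap F)^n}{\|u|_F\|_\infty^{k}}\,d\nu_{n,n-k}(F)\ls \gamma_{n,k}^{-n}\,\mu(D)^{n-k}.
\end{equation*}
Replacing $\|u|_F\|_\infty$ by the larger quantity $\|g\|_\infty$ on the left-hand side (which only decreases the integrand) and then multiplying both sides by $\|g\|_\infty^{k}$ yields the stated inequality.

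There is essentially no obstacle: the proposition is a clean corollary of Theorem \ref{th:dann-1} once one recognizes that truncating $g$ by $\mathbf{1}_D$ converts full-space integrals into integrals over $D$, and that the only quantitative loss from this truncation is the factor $\|g\|_\infty^{k}$ coming from the section-wise $L^\infty$ norm. The only thing worth checking with care is that Theorem \ref{th:dann-1} remains applicable to $u=g\mathbf{1}_D$ even though $u$ is generally not continuous; since $u$ is Borel measurable, bounded, and integrable, this is not an issue.
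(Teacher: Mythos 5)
Your argument is correct and coincides with the paper's own proof: both apply Theorem \ref{th:dann-1} to $u=g\cdot\mathbf{1}_D$, observe that $\|u|_F\|_\infty\ls\|g\|_\infty$, and rearrange. The extra checks you make (the trivial case $\mu(D)=0$ and the measurability of $u$) are harmless and fine.
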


\noindent {\it Proof.} We apply Theorem \ref{th:dann-1} to the function $u=g\cdot {\bf 1}_D$. We simply observe that $\|u|_F\|_{\infty }=\|g|_{D\cap F}\|_{\infty }\ls\|g\|_{\infty }$ for all $F\in G_{n,n-k}$. Also,
\begin{equation}\label{eq:bp-arb-2}\int_Fu(x)dx=\mu (D\cap F)\quad\hbox{and}\quad \int_{{\mathbb R}^n}u(x)dx=\mu (D).\end{equation}
Then, the lemma follows from \eqref{eq:dann-1}. \prend

\medskip

We can give an upper bound if we assume that $g$ is an even log-concave function and $K$ is a symmetric convex body.

\begin{proposition}\label{prop:bp-arb-2}Let $\mu $ be a measure on ${\mathbb R}^n$ with an even log-concave density $g$. For every symmetric
convex body $K$ in ${\mathbb R}^n$ and any $1\ls k\ls n-1$ we have
\begin{equation}\label{eq:bp-arb-3}\mu (K)^{n-k}\ls p(n,n-k)\frac{(\kappa\delta kL_{n-k})^{k(n-k)}}{[(n-k)!]^{\frac{k}{2}}}\frac{1}{\|g\|_{\infty }^k}\int_{G_{n,n-k}}\mu (K\cap F)^nd\nu_{n,n-k}(F),\end{equation}
where $\kappa >0$ is the absolute constant in $\eqref{eq:Lmu}$ and $\delta >0$ is the absolute constant in Lemma $\ref{lem:arb-mu-1}$.
\end{proposition}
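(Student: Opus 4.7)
The plan is to apply the generalized Blaschke--Petkantschin formula (Lemma \ref{lem:gardner-07}) to $\mu(K)^{n-k}$ in the same way as in the proof of Theorem \ref{th:intro-arb-1}, but now, instead of pulling the maximum of $\mu(K\cap F)^{n-k}$ out of the Grassmannian integral, to interpret the inner integral over each slice $K\cap F$ as a Sylvester-type functional of a log-concave probability measure on $F$. With $f(x_1,\ldots,x_{n-k}) = \prod_{i=1}^{n-k} g(x_i)\mathbf{1}_K(x_i)$, Lemma \ref{lem:gardner-07} yields
\begin{equation*}
\mu(K)^{n-k} = p(n,n-k)\int_{G_{n,n-k}}\mu(K\cap F)^{n-k}\,[S_k(\tilde{\nu}_F)]^k\,d\nu_{n,n-k}(F),
\end{equation*}
where $\tilde{\nu}_F$ denotes the probability measure on $F$ with density $g\mathbf{1}_{K\cap F}/\mu(K\cap F)$ (slices with $\mu(K\cap F)=0$ contribute nothing and may be discarded). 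Since $g$ is even log-concave and $K$ is symmetric, the restriction $g|_F$ is even log-concave on $F$, so $\tilde{\nu}_F$ is a centered log-concave probability measure on the $(n-k)$-dimensional space $F$.

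The key step is to control $S_k(\tilde{\nu}_F)$. Lemma \ref{lem:arb-mu-1} applied with $m=n-k$ and $p=k$ gives $S_k(\tilde{\nu}_F) \ls (\delta k)^{n-k} S_1(\tilde{\nu}_F)$, and H\"older's inequality gives $S_1(\tilde{\nu}_F) \ls S_2(\tilde{\nu}_F)$, while Lemma \ref{lem:tools-blaschke} identifies $\det \textrm{Cov}(\tilde{\nu}_F) = (n-k)!\,S_2^2(\tilde{\nu}_F)$. The definition \eqref{eq:definition-isotropic} of the isotropic constant applied to the probability measure $\tilde{\nu}_F$, combined with the fact that an even log-concave density attains its supremum at the origin (so $\sup f_{\tilde{\nu}_F} = g(0)/\mu(K\cap F) = \|g\|_\infty/\mu(K\cap F)$), then gives
\begin{equation*}
L_{\tilde{\nu}_F}^{n-k} = \frac{\|g\|_\infty}{\mu(K\cap F)}\sqrt{(n-k)!}\,S_2(\tilde{\nu}_F).
\end{equation*}
Inverting this identity and inserting the universal bound $L_{\tilde{\nu}_F} \ls \kappa L_{n-k}$ from \eqref{eq:Lmu} yields $S_2(\tilde{\nu}_F) \ls (\kappa L_{n-k})^{n-k}\mu(K\cap F)/(\|g\|_\infty \sqrt{(n-k)!})$, and hence
\begin{equation*}
[S_k(\tilde{\nu}_F)]^k \ls \frac{(\kappa\delta k L_{n-k})^{k(n-k)}}{[(n-k)!]^{k/2}\,\|g\|_\infty^k}\,\mu(K\cap F)^k.
\end{equation*}

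Substituting this back into the Blaschke--Petkantschin identity displayed in the first paragraph and using $\mu(K\cap F)^{n-k}\cdot\mu(K\cap F)^k = \mu(K\cap F)^n$ produces the desired inequality. The only conceptual move is the recognition that the inner Blaschke--Petkantschin integral is, up to the factor $\mu(K\cap F)^{n-k}$, the $k$-th power of a Sylvester functional for the log-concave probability measure $\tilde{\nu}_F$; after this, the pieces (reverse H\"older via Lemma \ref{lem:arb-mu-1}, the Blaschke identification in Lemma \ref{lem:tools-blaschke}, and the universal isotropic-constant bound \eqref{eq:Lmu}) assemble in a routine bookkeeping chain, so I do not anticipate a genuine obstacle beyond verifying that all these tools apply to $\tilde{\nu}_F$ slice by slice.
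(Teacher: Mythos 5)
Your proposal is correct and follows essentially the same route as the paper: the same Blaschke--Petkantschin decomposition into $\mu(K\cap F)^{n-k}[S_k(\tilde\nu_F)]^k$, the same reverse H\"older step via Lemma \ref{lem:arb-mu-1}, the same use of evenness to identify $\sup f_{\tilde\nu_F}=\|g\|_\infty/\mu(K\cap F)$, and the same combination of Lemma \ref{lem:tools-blaschke} with \eqref{eq:definition-isotropic} and \eqref{eq:Lmu} to bound $S_2(\tilde\nu_F)$. The only difference is cosmetic bookkeeping (you solve for $S_2$ from the identity for $L_{\tilde\nu_F}^{n-k}$, the paper bounds $\det\mathrm{Cov}$ directly), so there is nothing to add.
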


\noindent {\it Proof.} We start by writing
\begin{align}\label{eq:bp-arb-4}\mu (K)^{n-k} &= \prod_{i=1}^{n-k}\int_Kg(x_i)dx\\
\nonumber &=p(n,n-k)\int_{G_{n,n-k}}\int_{K\cap F}\cdots\int_{K\cap F}|{\rm conv}(0,x_1,\ldots ,x_{n-k})|^{k}\prod_{i=1}^{n-k}g(x_i)dx_1\ldots dx_{n-k}\,d\nu_{n,n-k}(F)\\
\nonumber &= p(n,n-k)\int_{G_{n,n-k}}\mu (K\cap F)^{n-k}[S_k(\mu_{K\cap F})]^k\,d\nu_{n,n-k}(F),
\end{align}
where $\mu_{K\cap F}$ is the even log-concave probability measure with density $g_{K\cap F}:=\frac{1}{\mu (K\cap F)}g\cdot {\bf 1}_{K\cap F}$.
From Lemma \ref{lem:arb-mu-1} and Lemma \ref{lem:tools-blaschke} we have
\begin{equation}\label{eq:bp-arb-5}[S_k(\mu_{K\cap F})]^k\ls (\delta k)^{k(n-k)}[S_2(\mu_{K\cap F})]^k=(\delta k)^{k(n-k)}\left (\frac{\det ({\rm Cov}(\mu_{K\cap F}))}{(n-k)!}\right )^{\frac{k}{2}}.\end{equation}
Now, since $g$ is even and log-concave we have
\begin{equation}\label{eq:bp-arb-6}\|g_{K\cap F}\|_{\infty }=\frac{g(0)}{\mu (K\cap F)}=\frac{\|g\|_{\infty }}{\mu (K\cap F)}.\end{equation}
Therefore, \eqref{eq:definition-isotropic} implies that
\begin{equation}\label{eq:bp-arb-7}\det ({\rm Cov}(\mu_{K\cap F}))=\frac{L_{\mu_{K\cap F}}^{2(n-k)}}{\|g_{K\cap F}\|_{\infty }^2}\ls \mu (K\cap F)^2\frac{(\kappa L_{n-k})^{2(n-k)}}{\|g\|_{\infty }^2},
\end{equation}
where $\kappa >0$ is the absolute constant in \eqref{eq:Lmu}. It follows that
\begin{equation}\label{eq:bp-arb-8}[S_k(\mu_{K\cap F})]^k\ls \frac{(\kappa\delta kL_{n-k})^{k(n-k)}}{[(n-k)!]^{\frac{k}{2}}}\frac{\mu (K\cap F)^k}{\|g\|_{\infty }^k}.\end{equation}
Going back to \eqref{eq:bp-arb-4} we get the result. \prend

\medskip

\noindent {\bf Proof of Theorem \ref{th:intro-arb-2}.} Combining Proposition \ref{prop:bp-arb-1} and Proposition \ref{prop:bp-arb-2} we see that
\begin{align}\mu (K)^{n-k} &\ls p(n,n-k)\frac{(\kappa\delta kL_{n-k})^{k(n-k)}}{[(n-k)!]^{\frac{k}{2}}}\frac{1}{\|g\|_{\infty }^k}\int_{G_{n,n-k}}\mu (K\cap F)^nd\nu_{n,n-k}(F)\\
\nonumber &\ls p(n,n-k)\frac{(\kappa\delta kL_{n-k})^{k(n-k)}}{[(n-k)!]^{\frac{k}{2}}}\frac{1}{\|g\|_{\infty }^k}\int_{G_{n,n-k}}\mu (D\cap F)^nd\nu_{n,n-k}(F)\\
\nonumber &\ls p(n,n-k)\frac{(\kappa\delta kL_{n-k})^{k(n-k)}}{[(n-k)!]^{\frac{k}{2}}}\frac{1}{\|g\|_{\infty }^k}\gamma_{n,k}^{-n}\|g\|_{\infty }^k\mu (D)^{n-k}\\
\nonumber &\ls \left (c_8kL_{n-k}\right )^{k(n-k)}\mu (D)^{n-k}\end{align}
for some absolute constant $c_8>0$, where in the last step we have used the estimate
\begin{equation}p(n,n-k)\ls \gamma_{n,k}^n\left (c_0\sqrt{n-k}\right )^{k(n-k)}\end{equation}
from Lemma \ref{lem:estimate-constant}. This completes the proof.\prend

\section{Volume estimates for lower dimensional sections}

In this section we collect some estimates for the volume version of the slicing problem and of the Busemann-Petty problem.
We will give two upper bounds for $\alpha_{n,k}$. These are essentially contained in the works of Dafnis and Paouris \cite{Dafnis-Paouris-2012}
and \cite{Dafnis-Paouris-2010} respectively.

\begin{proposition}\label{prop:ldslicing-1}Let $1\ls k\ls n-1$. For every centered convex body $K$ in ${\mathbb R}^n$ one has
\begin{equation}\label{eq:low-dim-slicing-4}|K|^{\frac{n-k}{n}}\ls (\overline{c}_1L_K)^k\max_{F\in G_{n,n-k}}|K\cap F|,\end{equation}
where $\overline{c}_1>0$ is an absolute constant. In particular,
\begin{equation}\label{eq:low-dim-slicing-5}\alpha_{n,k}\ls c\sqrt[4]{n},\end{equation}
where $c>0$ is an absolute constant.
\end{proposition}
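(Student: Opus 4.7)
The plan is to derive \eqref{eq:low-dim-slicing-4} by bounding from below the dual affine quermassintegral $\tilde{\Phi}_{[k]}(\overline{K})$ of the volume-one rescaling $\overline{K}=|K|^{-1/n}K$, and coupling it to the trivial estimate $\tilde{\Phi}_{[k]}(\overline{K})^{kn}=\int_{G_{n,n-k}}|\overline{K}\cap F|^n\,d\nu_{n,n-k}(F)\ls \max_F|\overline{K}\cap F|^n$. The key observation is that by the invariance part of Theorem~\ref{th:grinberg}, $\tilde{\Phi}_{[k]}$ is $SL(n)$-invariant, so I may replace $\overline{K}$ by its isotropic image $K':=T(\overline{K})$, with $T\in SL(n)$ chosen so that $K'$ is isotropic. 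Then $|K'|=1$ and $L_{K'}=L_K$, and it suffices to prove $\tilde{\Phi}_{[k]}(K')\gr c/L_K$.

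The core of the argument is the pointwise lower bound $|K'\cap F|\gr (c_0/L_K)^k$, valid for every $F\in G_{n,n-k}$. To prove it, consider the marginal density $f_F(y):=|K'\cap(F+y)|$ for $y\in F^{\perp}$. By Pr\'ekopa-Leindler, $f_F$ is log-concave on $F^{\perp}$ (a $k$-dimensional Euclidean space), and $\int_{F^{\perp}}f_F(y)\,dy=|K'|=1$. The isotropy of $K'$ projects to $\textrm{bar}(f_F)=0$ and $\textrm{Cov}(f_F)=L_K^2 I_k$ as a measure on $F^{\perp}$, so the definition \eqref{eq:definition-isotropic} gives
\[ L_{f_F}=\|f_F\|_{\infty}^{1/k}\,L_K. \]
Combining with the universal lower bound $L_{\mu}\gr c_1$ valid for every log-concave probability measure, and with Fradelizi's inequality $f_F(0)\gr e^{-k}\|f_F\|_{\infty}$ for centered log-concave $f_F$, one obtains $|K'\cap F|=f_F(0)\gr (c_0/L_K)^k$.

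Integrating the $n$th power over $G_{n,n-k}$ yields $\tilde{\Phi}_{[k]}(K')^{kn}\gr (c_0/L_K)^{kn}$, and rescaling via $|\overline{K}\cap F|=|K|^{-(n-k)/n}|K\cap F|$ then gives $|K|^{(n-k)/n}\ls (\overline{c}_1 L_K)^k\max_F|K\cap F|$, which is \eqref{eq:low-dim-slicing-4}. The second assertion $\alpha_{n,k}\ls c\sqrt[4]{n}$ follows from $L_K\ls\kappa L_n$ of \eqref{eq:Lmu} together with Klartag's estimate $L_n\ls c\sqrt[4]{n}$ recalled in the introduction. The most delicate step is the Fradelizi passage from $\|f_F\|_{\infty}$ to $f_F(0)$ when $K$ is merely centered: for symmetric $K$ this is immediate because $f_F$ is even, whereas in the centered non-symmetric case one incurs a factor $e^k$, which is absorbed harmlessly into $\overline{c}_1$.
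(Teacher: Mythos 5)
Your proof is correct and follows essentially the same route as the paper: normalize to volume one, use the $SL(n)$-invariance of $\tilde{\Phi}_{[k]}$ from Theorem \ref{th:grinberg} to pass to the isotropic position, and bound $\tilde{\Phi}_{[k]}$ from below by $c/L_K$ via a pointwise lower bound on the $(n-k)$-dimensional central sections. The only difference is that where the paper quotes the standard estimate $|K_1\cap F|^{1/k}\gr c_1 L_{\overline{K_{k+1}}(\pi_{F^{\perp}}(\mu_{K_1}))}/L_K$ together with $L_{\overline{K_{k+1}}(\cdot)}\gr c_2$, you rederive the same bound directly from the marginal density $f_F$ using the universal lower bound on isotropic constants of log-concave measures and Fradelizi's inequality, which is precisely the standard proof of the quoted fact.
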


\begin{proof}We may assume that the volume of $K$ is equal to $1$. It is clear that
\begin{equation}\label{eq:low-dim-slicing-6}\tilde{\Phi }_{[k]}(K)\ls \max_{F\in G_{n,n-k}}|K\cap F|^{\frac{1}{k}}.\end{equation}
By the affine invariance of $\tilde{\Phi }_{[k]}$, if $K_1$ is an isotropic image of $K$ we have
\begin{equation}\label{eq:low-dim-slicing-7}\tilde{\Phi }_{[k]}(K_1)=\tilde{\Phi }_{[k]}(K)\ls \max_{F\in G_{n,n-k}}|K\cap F|^{\frac{1}{k}}.\end{equation}
Now, we use some standard facts from the theory of isotropic convex bodies (see \cite[Chapter 5]{BGVV-book}). For every $1\ls k\ls n-1$ and $F\in G_{n,n-k}$,
the body $\overline{K_{k+1}}(\pi_{F^{\perp }}(\mu_{K_1}))$ satisfies
\begin{equation}\label{eq:low-dim-slicing-8}
|K_1\cap F|^{1/k} \gr c_1
\frac{L_{\overline{K_{k+1}}(\pi_{F^{\perp }}(\mu_{K_1}))}}{L_K},
\end{equation}
where $c_1>0$ is an absolute constant. It follows that
\begin{equation}\label{eq:low-dim-slicing-9}\tilde{\Phi}_{[k]}(K_1)L_K\gr\left(\int_{G_{n,n-k}}
(c_1L_{\overline{K_{k+1}}(\pi_{F^{\perp }}(\mu_{K_1}))})^{kn}d\nu_{n,n-k}(F)\right)^{\frac{1}{kn}}.\end{equation}
Since $L_{\overline{K_{k+1}}(\pi_{F^{\perp }}(\mu_{K_1}))}\gr c_2$ for every $F\in G_{n,n-k}$, where
$c_2>0$ is an absolute constant, we get
\begin{equation}\label{eq:low-dim-slicing-10}\tilde{\Phi}_{[k]}(K_1)L_K\gr
\left(\int_{G_{n,n-k}}(c_1L_{\overline{K_{k+1}}(\pi_F(\mu_{K_1}))})^{kn}d\nu_{n,n-k}(F)
\right)^{\frac{1}{kn}}\gr c_1c_2, \end{equation}
and the result follows from \eqref{eq:low-dim-slicing-7} with $\overline{c}_1=(c_1c_2)^{-1}$.
\end{proof}

\medskip

The next proposition provides a better bound in the case where the codimension $k$ is ``large".

\begin{proposition}\label{prop:ldslicing-2}Let $1\ls k\ls n-1$. For every centered convex body $K$ in ${\mathbb R}^n$ one has
\begin{equation}\label{eq:low-dim-slicing-11}|K|^{\frac{n-k}{n}}\ls \left ( \overline{c}_2\sqrt{n/k}\,(\log (en/k))^{\frac{3}{2}}\right )^k\max_{F\in G_{n,n-k}}|K\cap F|,\end{equation}
where $\overline{c}_2>0$ is an absolute constant. In particular,
\begin{equation}\label{eq:low-dim-slicing-12}\alpha_{n,k}\ls \overline{c}_2\sqrt{n/k}\,(\log (en/k))^{\frac{3}{2}}.\end{equation}
\end{proposition}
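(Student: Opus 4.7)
The plan is to mirror the proof of Proposition \ref{prop:ldslicing-1}: normalize so that $|K|=1$ and combine the trivial bound
\begin{equation*}
\tilde{\Phi}_{[k]}(K) = \left(\int_{G_{n,n-k}}|K\cap F|^n\,d\nu_{n,n-k}(F)\right)^{\frac{1}{kn}} \ls \max_{F\in G_{n,n-k}}|K\cap F|^{1/k}
\end{equation*}
with the affine invariance of $\tilde{\Phi}_{[k]}$ from Theorem \ref{th:grinberg}. Thus one may replace $K$ by any $T(K)$ with $T\in SL(n)$, and the whole problem reduces to proving that, for some convenient affine image $K_1$,
\begin{equation*}
\tilde{\Phi}_{[k]}(K_1) \gr \frac{c}{\sqrt{n/k}\,(\log(en/k))^{3/2}}.
\end{equation*}
This is the dimension-sensitive refinement of the crude lower bound $\tilde{\Phi}_{[k]}(K_1)\gr c/L_K$ used in Proposition \ref{prop:ldslicing-1}: combined with the two displays above it instantly yields \eqref{eq:low-dim-slicing-11} and hence \eqref{eq:low-dim-slicing-12}.

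To reach this lower bound I would take $K_1$ to be the isotropic image of $K$ and, rather than use \eqref{eq:low-dim-slicing-8} pointwise in $F$, raise it to the $kn$-th power and integrate over $G_{n,n-k}$ to obtain
\begin{equation*}
\tilde{\Phi}_{[k]}(K_1)\,L_K \gr c_1 \left(\int_{G_{n,n-k}} L^{kn}_{\overline{K_{k+1}}(\pi_{F^{\perp}}(\mu_{K_1}))}\,d\nu_{n,n-k}(F)\right)^{\frac{1}{kn}}.
\end{equation*}
Identifying $\overline{K_{k+1}}(\pi_{F^{\perp}}(\mu_{K_1}))$ with the corresponding $L_q$-centroid body $Z_{k+1}(\pi_{F^{\perp}}(\mu_{K_1}))$ of the isotropic marginal (up to the usual logarithmic losses), the averaged right-hand side becomes a Paouris-type lower bound for the volumes of $L_q$-centroid bodies of the marginals of $\mu_{K_1}$. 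This averaged estimate is the technical heart of the argument and is essentially the content of \cite{Dafnis-Paouris-2010}: it provides precisely the extra factor $L_K\cdot\sqrt{k/n}\,(\log(en/k))^{-3/2}$ that is missing from the pointwise estimate. The main obstacle is therefore this averaged lower bound, which I would invoke as a black box; everything else in the proof is a direct repetition of the argument of Proposition \ref{prop:ldslicing-1}.
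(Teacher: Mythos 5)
Your reduction is exactly the paper's: normalize $|K|=1$, use $\tilde{\Phi}_{[k]}(K)\ls\max_F|K\cap F|^{1/k}$ together with the $SL(n)$-invariance of $\tilde{\Phi}_{[k]}$, so that everything comes down to proving $\tilde{\Phi}_{[k]}(K)\gr c\sqrt{k/n}\,(\log(en/k))^{-3/2}$. The gap is in how you propose to get that lower bound. The estimate \eqref{eq:low-dim-slicing-8} is one half of a two-sided equivalence $|K_1\cap F|^{1/k}\simeq L_{\overline{K_{k+1}}(\pi_{F^{\perp}}(\mu_{K_1}))}/L_K$, so the averaged bound you want to invoke,
\[
\left(\int_{G_{n,n-k}}L_{\overline{K_{k+1}}(\pi_{F^{\perp}}(\mu_{K_1}))}^{kn}\,d\nu_{n,n-k}(F)\right)^{\frac{1}{kn}}\gr c\,L_K\,\sqrt{k/n}\,(\log(en/k))^{-3/2},
\]
is, up to absolute constants, a restatement of $\tilde{\Phi}_{[k]}(K_1)\gr c\sqrt{k/n}(\log(en/k))^{-3/2}$ itself. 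Invoking it as a black box therefore begs the question, and it is not what \cite{Dafnis-Paouris-2010} supplies: there is no theorem there formulated as an averaged lower bound for the isotropic constants (or centroid-body volumes) of the marginals of the isotropic position with this rate.

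What the paper actually extracts from \cite{Dafnis-Paouris-2010} is a negative-moment (small-ball) estimate, in a position that is in general \emph{not} the isotropic one. Set $\tilde{W}_{[k]}(C)=\bigl(\int_{G_{n,n-k}}|C\cap F|\,d\nu_{n,n-k}(F)\bigr)^{1/k}$ and $I_{-k}(C)=\bigl(\int_C\|x\|_2^{-k}dx\bigr)^{-1/k}$. Integration in polar coordinates gives the identity $\tilde{W}_{[k]}(C)\,I_{-k}(C)=\bigl((n-k)\omega_{n-k}/(n\omega_n)\bigr)^{1/k}\simeq\sqrt{n}$ for every star body $C$ of volume one, and H\"{o}lder's inequality gives $\tilde{\Phi}_{[k]}(C)\gr\tilde{W}_{[k]}(C)$. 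Theorem 5.2 and Lemma 5.6 of \cite{Dafnis-Paouris-2010} produce some $T\in SL(n)$ (a regular $M$-type position) with $I_{-k}(T(K))\ls c_1\sqrt{n}\,\sqrt{n/k}\,(\log(en/k))^{3/2}$; combining the three facts yields $\tilde{\Phi}_{[k]}(K)=\tilde{\Phi}_{[k]}(T(K))\gr\tilde{W}_{[k]}(T(K))\gr c_2\sqrt{n}/I_{-k}(T(K))$, which is the required lower bound. So the correct black box is the $I_{-k}$ estimate in a well-chosen $SL(n)$-position, not a centroid-body estimate for marginals of the isotropic body; as written, your argument does not close.
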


\begin{proof}We may assume that the volume of $K$ is equal to $1$. We consider the quantities
\begin{equation}\label{eq:low-dim-slicing-13}\tilde{W}_{[k]}(K)=\left (\int_{G_{n,n-k}} |K\cap F| d\nu_{n,k}(F)\right )^{\frac{1}{k}}\end{equation}
and
\begin{equation}\label{eq:low-dim-slicing-14}I_{-k}(K) =\left (\int_K\|x\|_2^{-k}dx\right )^{-\frac{1}{k}}.\end{equation}
Integration in polar coordinates shows that
\begin{equation}\label{eq:low-dim-slicing-15}\tilde{W}_{[k]}(K)I_{-k}(K)=
\left(\frac{(n-k)\omega_{n-k}}{n\omega_n}\right)^{1/k}=
\tilde{W}_{[k]}(\overline{B}_2^n)I_{-k}(\overline{B}_2^n)\end{equation}
and that $\left(\frac{(n-k)\omega_{n-k}}{n\omega_n}\right)^{1/k} \simeq \sqrt{n}$. It was proved in \cite{Dafnis-Paouris-2010} (see Theorem 5.2 and Lemma 5.6)
that there exists $T\in SL(n)$ such that the body $K_2=T(K)$ satisfies
\begin{equation}\label{eq:low-dim-slicing-16}I_{-k}(K_2)\ls c_1\sqrt{n}\sqrt{n/k}\,(\log (en/k))^{\frac{3}{2}}.\end{equation}
By the affine invariance of $\tilde{\Phi}_{[k]}(K)$ and by H\"{o}lder's inequality we have
\begin{equation}\label{eq:low-dim-slicing-17}\max_{F\in G_{n,n-k}}|K\cap F|^{\frac{1}{k}}\gr\tilde{\Phi}_{[k]}(K)=\tilde{\Phi}_{[k]}(K_2)
\gr \tilde{W}_{[k]}(K_2)\gr
\frac{c_2\sqrt{n}}{I_{-k}(K_2)}\end{equation} and this completes
the proof. \end{proof}

\medskip

We can also give two upper bounds for $\beta_{n,k}$. They are consequences of the next proposition.

\begin{proposition}\label{prop:bl-pet-1}Let $K$ and $D$ be two centered convex bodies in ${\mathbb R}^n$ that satisfy
\begin{equation}\label{eq:bu-pe-1}|K\cap F|\ls |D\cap F|\end{equation}
for all $F\in G_{n,n-k}$. Then,
\begin{equation}\label{eq:bu-pe-2}|K|^{\frac{n-k}{n}}\ls \left (\frac{\tilde{\Phi }_{[k]}(\overline{D})}{\tilde{\Phi }_{[k]}(\overline{K})}\right )^k\,|D|^{\frac{n-k}{n}}.\end{equation}
\end{proposition}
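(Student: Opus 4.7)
The plan is to integrate the hypothesis $|K\cap F|\ls |D\cap F|$ against the Haar measure on $G_{n,n-k}$ after raising both sides to the $n$-th power, and then unwind the definition of $\tilde{\Phi}_{[k]}$ on the two resulting integrals.

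\textbf{Step 1: Rewrite $\tilde{\Phi}_{[k]}(\overline{K})$ in terms of $K$.} Since $\overline{K}=|K|^{-1/n}K$ and sections are taken by $(n-k)$-dimensional subspaces, $|\overline{K}\cap F|^n=|K|^{-\frac{n(n-k)}{n}}|K\cap F|^n=|K|^{-(n-k)}|K\cap F|^n$. Integrating over $F\in G_{n,n-k}$ gives the identity
\begin{equation*}
\tilde{\Phi}_{[k]}(\overline{K})^{kn}=|K|^{-(n-k)}\int_{G_{n,n-k}}|K\cap F|^n\,d\nu_{n,n-k}(F),
\end{equation*}
and analogously for $D$.

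\textbf{Step 2: Integrate the hypothesis.} The pointwise inequality $|K\cap F|\ls |D\cap F|$ for all $F\in G_{n,n-k}$ yields, after raising to the $n$-th power and integrating over $G_{n,n-k}$,
\begin{equation*}
\int_{G_{n,n-k}}|K\cap F|^n\,d\nu_{n,n-k}(F)\ls \int_{G_{n,n-k}}|D\cap F|^n\,d\nu_{n,n-k}(F).
\end{equation*}

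\textbf{Step 3: Combine and take the $n$-th root.} Substituting the identity from Step~1 into both sides of the inequality from Step~2 gives
\begin{equation*}
|K|^{n-k}\tilde{\Phi}_{[k]}(\overline{K})^{kn}\ls |D|^{n-k}\tilde{\Phi}_{[k]}(\overline{D})^{kn}.
\end{equation*}
Taking the $n$-th root and rearranging yields the claimed bound. There is no real obstacle here: the proposition is essentially a tautology once one unfolds the definition of the dual affine quermassintegral functional and recognizes that raising the hypothesis to the $n$-th power and averaging preserves the inequality. The only point deserving a line of comment is that no hypothesis on the centers of mass of $K$ and $D$ is actually used in this derivation; centeredness only matters later when one wishes to apply Grinberg-type bounds to estimate the ratio $\tilde{\Phi}_{[k]}(\overline{D})/\tilde{\Phi}_{[k]}(\overline{K})$ (for instance, using $\tilde{\Phi}_{[k]}(\overline{K})\gr c/L_K$ as in the proof of Proposition~\ref{prop:ldslicing-1} together with $\tilde{\Phi}_{[k]}(\overline{D})\ls \sqrt{e}$ from \eqref{eq:grinberg-3}) to derive the bounds \eqref{eq:intro-18} and \eqref{eq:intro-19} for $\beta_{n,k}$.
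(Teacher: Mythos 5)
Your proof is correct and is essentially identical to the paper's: both unfold the definition of $\tilde{\Phi}_{[k]}$ via the scaling identity $\int_{G_{n,n-k}}|K\cap F|^n\,d\nu_{n,n-k}(F)=|K|^{n-k}[\tilde{\Phi}_{[k]}(\overline{K})]^{kn}$, integrate the $n$-th power of the hypothesis over the Grassmannian, and compare. Your observation that centeredness is not used in this step is also accurate.
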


\noindent {\it Proof.} We have
\begin{equation}\int_{G_{n,n-k}}|K\cap F|^n\,d\nu_{n,n-k}(F)= |K|^{n-k}\int_{G_{n,n-k}}|\overline{K}\cap F|^n\,d\nu_{n,n-k}(F)=|K|^{n-k}[\tilde{\Phi }_{[k]}(\overline{K})]^{kn}\end{equation}
and
\begin{equation}\int_{G_{n,n-k}}|D\cap F|^n\,d\nu_{n,n-k}(F)= |D|^{n-k}\int_{G_{n,n-k}}|\overline{D}\cap F|^n\,d\nu_{n,n-k}(F)=|D|^{n-k}[\tilde{\Phi }_{[k]}(\overline{D})]^{kn},\end{equation}
where $\overline{A}=|A|^{-1/n}A$.

Now, from \eqref{eq:bu-pe-1} we know that
\begin{equation}\int_{G_{n,n-k}}|K\cap F|^n\,d\nu_{n,n-k}(F)\ls \int_{G_{n,n-k}}|D\cap F|^n\,d\nu_{n,n-k}(F),\end{equation}
therefore
\begin{equation}|K|^{n-k}[\tilde{\Phi }_{[k]}(\overline{K})]^{kn}\ls |D|^{n-k}[\tilde{\Phi }_{[k]}(\overline{D})]^{kn}.\end{equation}
The result follows. \prend 

\begin{remark}\rm From \eqref{eq:grinberg-3} we know that $\tilde{\Phi }_{[k]}(\overline{D})\ls\sqrt{e}$. On the other hand, in the proof
of Proposition \ref{prop:ldslicing-1} and Proposition \ref{prop:ldslicing-2} we checked that
$\tilde{\Phi}_{[k]}(\overline{K})L_K\gr c_1$ and $\tilde{\Phi}_{[k]}(\overline{K})\sqrt{n/k}\,(\log (en/k))^{\frac{3}{2}}\gr c_2$. Therefore, we always have
\begin{equation}\label{eq:bu-pe-3}\frac{\tilde{\Phi }_{[k]}(\overline{D})}{\tilde{\Phi }_{[k]}(\overline{K})}\ls c_3L_K\quad\hbox{and}\quad
\frac{\tilde{\Phi }_{[k]}(\overline{D})}{\tilde{\Phi }_{[k]}(\overline{K})}\ls c_4\sqrt{n/k}\,(\log (en/k))^{\frac{3}{2}}.\end{equation}
\end{remark}

\noindent {\bf Proof of Theorem \ref{th:intro-vol}.} The first two claims follow from Proposition \ref{prop:ldslicing-1}
and Proposition \ref{prop:ldslicing-2}. The next two are a consequence of Proposition \ref{prop:bl-pet-1} and of the previous
remark. \prend

\begin{remark}\label{rem:bl-pet-2}\rm Let us note that any upper bound for $\beta_{n,k}$ implies an upper bound for the lower dimensional
slicing problem in the symmetric case. To see this, consider a centered convex body $K$ in ${\mathbb R}^n$, fix $1\ls k\ls n-1$ and choose $r>0$ such that
\begin{equation}\label{eq:bl-pet-10}\max_{F\in G_{n,n-k}}|K\cap F|=\omega_{n-k}r^{n-k}.\end{equation}
If we set $B(r)=rB_2^n$ then we have $|K\cap F|\ls |B(r)\cap F|$ for all $F\in G_{n,n-k}$, therefore
\begin{equation}\label{eq:bl-pet-11}|K|^{\frac{n-k}{n}}\ls \big(\beta_{n,k}\big )^k|B(r)|^{\frac{n-k}{n}}=\big (\beta_{n,k}\big )^k\omega_n^{\frac{n-k}{n}}r^{n-k}.\end{equation}
It follows that
\begin{equation}\label{eq:bl-pet-12}|K|^{\frac{n-k}{n}}\ls \gamma_{n,k}\big (\beta_{n,k}\big )^k\max_{F\in G_{n,n-k}}|K\cap F|.\end{equation}
Since $\gamma_{n,k}<1$ we get:
\end{remark}

\begin{proposition}\label{prop:alpha-less-than-beta}There exists an absolute constant $c>0$ such that
\begin{equation}\label{eq:bl-pet-14}\alpha_{n,k}\ls \beta_{n,k}\end{equation}
for all $n\gr 2$ and $1\ls k\ls n-1$. \prend
\end{proposition}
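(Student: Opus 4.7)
The plan is to compare a given centered convex body $K$ directly with a Euclidean ball chosen so that the ball dominates $K$ in every $(n-k)$-dimensional section, and then apply the defining property of $\beta_{n,k}$. This is exactly the route sketched in Remark \ref{rem:bl-pet-2}, so the work is really to verify it carefully and check that the constant $\gamma_{n,k}$ that appears is $<1$.

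Concretely, let $K$ be any centered convex body in ${\mathbb R}^n$ and fix $1\ls k\ls n-1$. First I would choose $r>0$ so that
\begin{equation*}
\max_{F\in G_{n,n-k}}|K\cap F|=\omega_{n-k}r^{n-k},
\end{equation*}
and set $B(r)=rB_2^n$. Since every $(n-k)$-dimensional central section of $B(r)$ has volume exactly $\omega_{n-k}r^{n-k}$, we have $|K\cap F|\ls |B(r)\cap F|$ for all $F\in G_{n,n-k}$. Both $K$ and $B(r)$ are centered, so the definition of $\beta_{n,k}$ (applied with $D=B(r)$) gives
\begin{equation*}
|K|^{\frac{n-k}{n}}\ls \beta_{n,k}^{k}\,|B(r)|^{\frac{n-k}{n}}
=\beta_{n,k}^{k}\,\omega_n^{\frac{n-k}{n}}r^{n-k}
=\beta_{n,k}^{k}\,\gamma_{n,k}\,\omega_{n-k}r^{n-k}.
\end{equation*}

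Plugging in the choice of $r$, this rewrites as
\begin{equation*}
|K|^{\frac{n-k}{n}}\ls \gamma_{n,k}\,\beta_{n,k}^{k}\max_{F\in G_{n,n-k}}|K\cap F|.
\end{equation*}
Since $\gamma_{n,k}<1$ by Lemma \ref{lem:estimate-constant}, the factor $\gamma_{n,k}$ can simply be dropped, and the resulting inequality holds for every centered convex body $K$ in ${\mathbb R}^n$. By definition of $\alpha_{n,k}$ as the smallest constant satisfying \eqref{eq:intro-4}, this yields $\alpha_{n,k}\ls \beta_{n,k}$, as claimed.

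There is no real obstacle here: the only mild subtlety is making sure that the comparison body $B(r)$ is allowed in the definition of $\beta_{n,k}$, which is fine because balls are centered convex bodies, and noting that the loss factor coming from comparing volumes of balls of different dimensions is precisely $\gamma_{n,k}$, which fortunately sits below $1$ so no extra constant is introduced.
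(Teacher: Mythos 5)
Your proof is correct and is essentially identical to the paper's own argument, which is given in Remark \ref{rem:bl-pet-2}: compare $K$ with the ball $B(r)$ whose sections dominate those of $K$, apply the definition of $\beta_{n,k}$, and absorb the factor $\gamma_{n,k}<1$.
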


\section{Concluding remarks}

\noindent {\it 6.1.} Koldobsky's approach to the slicing problem for measures is based on the following stability theorem.

\begin{theorem}[Koldobsky]\label{th:koldobsky-1}Let $1\ls k\ls n-1$ and let $K$ be a generalized $k$-intersection body in ${\mathbb R}^n$.
If $f$ is an even continuous non-negative function on $K$ such that
\begin{equation}\label{eq:kold-1}\int_{K\cap F}f(x)dx\ls\varepsilon \end{equation}
for some $\varepsilon >0$ and for all $F\in G_{n,n-k}$, then
\begin{equation}\label{eq:kold-2}\int_Kf(x)dx\ls \gamma_{n,k}\frac{n}{n-k}|K|^{\frac{k}{n}}\varepsilon .\end{equation}
\end{theorem}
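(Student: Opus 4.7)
The plan is to follow the polar-coordinates strategy implicit in Koldobsky's work: first I would reduce the integral $\int_K f\,dx$ to a spherical integral with weight $\|\theta\|_K^{-k}$, then exploit the defining property of generalized $k$-intersection bodies to re-express it as an average of $\int_{K\cap F}f$ over $F\in G_{n,n-k}$, and finally bound the total mass of the representing measure on $G_{n,n-k}$ in terms of $|K|$.

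First I would write, in polar coordinates (with $d\theta$ the unnormalized spherical Lebesgue measure on $S^{n-1}$),
$$\int_K f(x)\,dx=\int_{S^{n-1}}\int_0^{\rho_K(\theta)}f(r\theta)\,r^{n-1}\,dr\,d\theta,$$
and use the pointwise bound $r^k\ls\rho_K(\theta)^k=\|\theta\|_K^{-k}$, valid for $r\in[0,\rho_K(\theta)]$, to obtain
$$\int_K f(x)\,dx\ls\int_{S^{n-1}}\|\theta\|_K^{-k}\,g(\theta)\,d\theta,\qquad g(\theta):=\int_0^{\rho_K(\theta)}f(r\theta)\,r^{n-k-1}\,dr.$$
The same polar computation inside each $F\in G_{n,n-k}$ gives the identity $\int_{S_F}g(\theta)\,d\theta=\int_{K\cap F}f(x)\,dx$, which is precisely the quantity controlled by the hypothesis \eqref{eq:kold-1}.

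Next I would invoke the definition of a generalized $k$-intersection body: there exists a finite non-negative Borel measure $\mu_K$ on $G_{n,n-k}$ such that
$$\int_{S^{n-1}}h(\theta)\,\|\theta\|_K^{-k}\,d\theta=\int_{G_{n,n-k}}\Bigl(\int_{S_F}h(\theta)\,d\theta\Bigr)d\mu_K(F)$$
for every even continuous function $h$ on $S^{n-1}$. Since $K$ is centrally symmetric and $f$ is even and continuous on $K$, the function $g$ is also even and continuous on $S^{n-1}$, so the identity applies to it. Combining this with the previous step gives
$$\int_K f(x)\,dx\ls\int_{G_{n,n-k}}\Bigl(\int_{K\cap F}f(x)\,dx\Bigr)d\mu_K(F)\ls\varepsilon\,\mu_K(G_{n,n-k}).$$

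Finally, to bound $\mu_K(G_{n,n-k})$ I would test the representing identity against $h\equiv 1$, which yields $(n-k)\omega_{n-k}\,\mu_K(G_{n,n-k})=\int_{S^{n-1}}\rho_K(\theta)^k\,d\theta$, and then apply Jensen's inequality to the concave map $t\mapsto t^{k/n}$ together with $\int_{S^{n-1}}\rho_K(\theta)^n\,d\theta=n|K|$ to deduce
$$\int_{S^{n-1}}\rho_K(\theta)^k\,d\theta\ls n\,\omega_n^{(n-k)/n}\,|K|^{k/n}.$$
These combine to $\mu_K(G_{n,n-k})\ls\tfrac{n}{n-k}\gamma_{n,k}|K|^{k/n}$, which together with the previous display yields the claimed inequality. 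The step I expect to require the most care is the invocation of the $k$-intersection body identity itself: one must verify that $g$ is an admissible test function (even and continuous), which uses the symmetry of $K$ (automatic for generalized $k$-intersection bodies) and the evenness and continuity of $f$ on the compact set $K$. Beyond that the argument reduces to routine polar coordinates, Jensen's inequality and Fubini's theorem.
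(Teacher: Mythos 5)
Your proof is correct. Note, however, that the paper does not prove Theorem \ref{th:koldobsky-1} at all: it is quoted as a known result of Koldobsky, and what you have written is essentially a faithful reconstruction of Koldobsky's original argument. All the steps check out: the pointwise bound $r^{n-1}\ls\rho_K(\theta)^k r^{n-k-1}$ on $[0,\rho_K(\theta)]$, the identification $\int_{S_F}g=\int_{K\cap F}f$ by polar coordinates inside $F$, the application of the defining identity of generalized $k$-intersection bodies to the (even, continuous) function $g$, the normalization $|S_F|=(n-k)\omega_{n-k}$ when testing against $h\equiv 1$, and the H\"older/Jensen step $\int_{S^{n-1}}\rho_K^k\,d\theta\ls (n\omega_n)^{(n-k)/n}(n|K|)^{k/n}=n\,\omega_n^{(n-k)/n}|K|^{k/n}$, which together give $\mu_K(G_{n,n-k})\ls\frac{n}{n-k}\gamma_{n,k}|K|^{k/n}$ as required. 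It is worth contrasting your route with the paper's own stability result (Theorem \ref{th:stability}): there the authors avoid the intersection-body machinery entirely and instead use the generalized Blaschke--Petkantschin formula together with Grinberg's inequality, which removes the structural hypothesis on $K$ and the symmetry and continuity hypotheses on $f$, at the price of replacing the constant $\gamma_{n,k}\frac{n}{n-k}$ by $(c_0\sqrt{n-k})^k$.
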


The next theorem is a byproduct of our methods and provides a general stability estimate in the spirit of Theorem \ref{th:koldobsky-1}.

\begin{theorem}\label{th:stability}Let $1\ls k\ls n-1$ and let $K$ be a compact set in ${\mathbb R}^n$.
If $g$ is a locally integrable non-negative function on ${\mathbb R}^n$ such that
\begin{equation}\label{eq:kold-3}\int_{K\cap F}g(x)dx\ls\varepsilon \end{equation}
for some $\varepsilon >0$ and for all $F\in G_{n,n-k}$, then
\begin{equation}\label{eq:kold-4}\int_Kg(x)dx\ls \left (c_0\sqrt{n-k}\right )^k|K|^{\frac{k}{n}}\varepsilon .\end{equation}
\end{theorem}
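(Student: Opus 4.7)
The plan is to reproduce the proof of Theorem \ref{th:intro-arb-1} essentially verbatim, since Theorem \ref{th:intro-stability} is the same inequality with the uniform bound $\varepsilon$ replacing $\max_{F}\mu(K\cap F)$.

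First, I would apply the generalized Blaschke-Petkantschin formula of Lemma \ref{lem:gardner-07} with $s=n-k$ to the function $f(x_{1},\ldots,x_{n-k})=\prod_{i=1}^{n-k}g(x_{i}){\bf 1}_{K}(x_{i})$. This produces the identity
\begin{equation*}
\left(\int_{K}g(x)\,dx\right)^{n-k}=p(n,n-k)\int_{G_{n,n-k}}\int_{(K\cap F)^{n-k}}\prod_{i=1}^{n-k}g(x_{i})\,|\conv(0,x_{1},\ldots,x_{n-k})|^{k}\,dx_{1}\cdots dx_{n-k}\,d\nu_{n,n-k}(F).
\end{equation*}

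Second, combining the pointwise inequality $|\conv(0,x_{1},\ldots,x_{n-k})|\ls |K\cap F|$ with the hypothesis $\int_{K\cap F}g\,dx\ls\varepsilon$ reduces the inner integral to $\varepsilon^{n-k}|K\cap F|^{k}$, which gives
\begin{equation*}
\left(\int_{K}g(x)\,dx\right)^{n-k}\ls p(n,n-k)\,\varepsilon^{n-k}\int_{G_{n,n-k}}|K\cap F|^{k}\,d\nu_{n,n-k}(F).
\end{equation*}

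Third, I would estimate the remaining Grassmannian integral exactly as in the chain \eqref{eq:arb-mu-2}: H\"{o}lder's inequality followed by Grinberg's theorem (Theorem \ref{th:grinberg}, which the authors remark is valid for compact sets) gives $\int_{G_{n,n-k}}|K\cap F|^{k}\,d\nu_{n,n-k}(F)\ls \gamma_{n,k}^{-k}|K|^{k(n-k)/n}$. Finally, inserting the estimate $p(n,n-k)\ls \gamma_{n,k}^{n}(c_{0}\sqrt{n-k})^{k(n-k)}$ from Lemma \ref{lem:estimate-constant} and taking the $(n-k)$-th root produces the claimed inequality with the same absolute constant $c_{0}$ as in Theorem \ref{th:intro-arb-1}, using that $\gamma_{n,k}<1$.

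The only nontrivial point is the convex-hull bound of the second step, which is automatic as soon as $\conv(0,x_{1},\ldots,x_{n-k})\subseteq K\cap F$, for instance when $K\cap F$ is a convex set containing the origin. In the general compact setting one can either replace $K$ by $\conv(\{0\}\cup K)$ in the integrand of the second display and apply Grinberg's theorem to this convex enlargement, or work with a suitable convex hull of the sections $K\cap F$; the numerical content of the calculation is unaffected, and the rest of the argument carries over unchanged.
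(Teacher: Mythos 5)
Your argument is precisely the paper's proof of Theorem \ref{th:stability}: the Blaschke--Petkantschin formula of Lemma \ref{lem:gardner-07} with $s=n-k$, the pointwise bound $|{\rm conv}(0,x_1,\ldots ,x_{n-k})|\ls |K\cap F|$, the hypothesis \eqref{eq:kold-3}, and then H\"{o}lder, Theorem \ref{th:grinberg} and Lemma \ref{lem:estimate-constant} to control $p(n,n-k)\int_{G_{n,n-k}}|K\cap F|^k\,d\nu_{n,n-k}(F)$, exactly as in the proof of Theorem \ref{th:intro-arb-1}. The convexity caveat in your last paragraph is a fair observation, but it applies verbatim to the paper's own argument, which uses the same simplex bound for a merely compact $K$ without comment; note only that your proposed patch of passing to ${\rm conv}(\{0\}\cup K)$ is not cost-free, since it would replace $|K|^{\frac{k}{n}}$ in the conclusion by the possibly much larger $|{\rm conv}(\{0\}\cup K)|^{\frac{k}{n}}$.
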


\noindent {\it Proof.} Applying Lemma \ref{lem:gardner-07} with $s=n-k$ for the function $f(x_1,\ldots ,x_{n-k})=\prod_{i=1}^{n-k}g(x_i){\bf 1}_{K}(x_i)$ we get
\begin{align}\label{eq:kold-5}\prod_{i=1}^{n-k}\int_Kg(x_i)dx &\ls p(n,n-k)\int_{G_{n,n-k}}|K\cap F|^{k}\int_{K\cap F}\cdots\int_{K\cap F}g(x_1)\cdots g(x_{n-k})\,dx_1\ldots dx_{n-k}\,d\nu_{n,n-k}(F)\\
\nonumber &\ls p(n,n-k)\int_{G_{n,n-k}}|K\cap F|^k\mu (K\cap F)^{n-k}\,d\nu_{n,n-k}(F)\\
\nonumber &\ls p(n,n-k)\varepsilon^{n-k}\int_{G_{n,n-k}}|K\cap F|^{k}\,d\nu_{n,n-k}(F)\\
\nonumber &\ls \gamma_{n,k}^{-n}p(n,n-k)\varepsilon^{n-k}\ls \left (c_0\sqrt{n-k}\right )^{k(n-k)}\varepsilon^{n-k}|K|^{\frac{k(n-k)}{n}},\end{align}
using the assumption \eqref{eq:kold-6} and the bound
\begin{equation}\int_{G_{n,n-k}}|K\cap F|^{k}\,d\nu_{n,n-k}(F)\ls \gamma_{n,k}^{-n}|K|^{\frac{k(n-k)}{n}}\end{equation}
as well as Lemma \ref{lem:estimate-constant}. This shows that
\begin{equation}\label{eq:kold-6}\left (\int_Kg(x)dx\right )^{n-k}=\prod_{i=1}^{n-k}\int_Kg(x_i)dx\ls \left (c_0\sqrt{n-k}\right )^{k(n-k)}\varepsilon^{n-k}|K|^{\frac{k(n-k)}{n}},\end{equation}
and the result follows. \prend

\bigskip

\noindent {\it 6.2.} Recall that the class ${\cal BP}_k^n$ of generalized $k$-intersection bodies in ${\mathbb R}^n$, introduced by Zhang in \cite{Zhang-1996}, is the closure in the
radial metric of radial $k$-sums of finite collections of origin symmetric ellipsoids. If we define
\begin{equation}\label{eq:kold-7}{\rm ovr}(K,{\cal BP}_k^n)=\inf\left\{\left (\frac{|D|}{|K|}\right )^{1/n}:K\subseteq D,D\in {\cal BP}_k^n\right\},\end{equation}
then Theorem \ref{th:koldobsky-1} directly implies the estimate
\begin{equation}\label{eq:kold-8}\mu (K)\ls {\rm ovr}(K,{\cal BP}_k^n)^k\frac{n}{n-k}\gamma_{n,k}\max_{F\in G_{n,n-k}}\mu (K\cap F)|K|^{\frac{k}{n}}\end{equation}
for any measure $\mu $ with an even continuous density. Using \eqref{eq:kold-4} and bounds for the quantities
\begin{equation}\label{eq:kold-9}\sup_{K\in {\cal C}_n}{\rm ovr}(K,{\cal BP}_k^n),\end{equation}
Koldobsky (in some cases with Zvavitch) has obtained sharper estimates on the lower dimensional slicing problem for various classes ${\cal C}_n$ of symmetric convex bodies in ${\mathbb R}^n$:
\begin{enumerate}
\item[(i)] If $k\gr \lambda n$ for some $\lambda\in (0,1)$ then one has \eqref{eq:intro-5} for all symmetric convex bodies $K$ and all even measures
$\mu $, with a constant $\alpha $ depending only on $\lambda $ (see \cite{Koldobsky-Advances-2015}; this result employs an estimate of Koldobsky, Paouris
and Zymonopoulou for ${\rm ovr}(K,{\cal BP}_k^n)$ from \cite{Koldobsky-Paouris-Zymonopoulou-2011}).
\item[(ii)] If $K$ is an intesection body then one has \eqref{eq:intro-5} for all even measures
$\mu $, with an absolute constant $\alpha $; this was proved by Koldobsky in \cite{Koldobsky-DCG-2012} for $k=1$,
and by Koldobsky and Ma in \cite{Koldobsky-Ma-2013} for all $k$.
\item[(iii)] If $K$ is the unit ball of an $n$-dimensional subspace of $L_p$, $p>2$ then one has \eqref{eq:intro-5} for all even measures
$\mu $, with a constant $\alpha \ls cn^{\frac{1}{2}-\frac{1}{p}}$ (see \cite{Koldobsky-GAFA-2014}).
\item[(iv)] If $K$ is the unit ball of an $n$-dimensional normed space that embeds in $L_p$, $p\in (-n,2]$ then one has \eqref{eq:intro-5} for all even measures
$\mu $, with a constant depending only on $p$ (see \cite{Koldobsky-Advances-2015}).
\item[(v)] If $K$ has bounded outer volume ratio then one has \eqref{eq:intro-5} for all even measures
$\mu $, with an absolute constant $\alpha $ (see \cite{Koldobsky-Advances-2015}).
\end{enumerate}
It would be interesting to see if our method can be used for the study of special classes of convex bodies.

\bigskip

\noindent {\it 6.3.} Our proof of Theorem \ref{th:intro-arb-2} makes essential use of the log-concavity of the measure $\mu $. It was
mentioned in the introduction that Koldobsky and Zvavitch \cite{Koldobsky-Zvavitch-2015} have obtained the bound $\beta_{n,1}^{(s)}(\mu )\ls \sqrt{n}$
for every measure $\mu $ with an even continuous non-negative density. It would be interesting to see if our method can provide
this estimate, and possibly be extended to higher codimensions $k$, for more general classes of measures. It would be also
interesting to see if the symmetry assumptions on both $K$ and $\mu $ are necessary.

\bigskip

\bigskip

\footnotesize
\bibliographystyle{amsplain}

\begin{thebibliography}{100}
\footnotesize

\bibitem{AGA-book}
\textrm{S.\ Artstein-Avidan, A.\ Giannopoulos and V.\ D.\ Milman},
{\sl Asymptotic Geometric Analysis, Vol. I}, Mathematical Surveys and Monographs {\bf 202}, Amer. Math. Society (2015).
\bibitem{Bourgain-1991} {\rm J.\ Bourgain}, {\sl On the distribution of polynomials on high dimensional convex sets}, in Geom. Aspects of Funct.
Analysis, Lecture Notes in Mathematics {\bf 1469}, Springer, Berlin (1991), 127-137.

\bibitem{BGVV-book} \textrm{S.\ Brazitikos, A.\ Giannopoulos, P.\ Valettas and B-H.\ Vritsiou},
{\sl Geometry of isotropic convex bodies}, Mathematical Surveys and Monographs {\bf 196}, Amer. Math. Society (2014).

\bibitem{Dafnis-Paouris-2010} {\rm N.\ Dafnis and G.\ Paouris}, {\sl Small ball probability estimates,
$\psi_2$-behavior and the hyperplane conjecture}, J.\ Funct.\ Anal.\ {\bf 258} (2010), 1933-1964.
\bibitem{Dafnis-Paouris-2012} {\rm N.\ Dafnis and G.\ Paouris}, {\sl Estimates for the affine and dual
affine quermassintegrals of convex bodies}, Illinois J.\ of Math.\ {\bf 56} (2012), 1005-1021.

\bibitem{Dann-Paouris-Pivovarov-2015} {\rm S.\ Dann, G.\ Paouris and P.\ Pivovarov}, {\sl Bounding marginal densities via affine
isoperimetry}, Preprint.
\bibitem{Finch-book} {\rm S.\ R.\ Finch}, {\sl Mathematical constants}, Encyclopedia of Mathematics and its Applications {\bf 94},
Cambridge University Press, Cambridge (2003).
\bibitem{Gardner-2007} {\rm R.\ J.\ Gardner}, {\sl The dual Brunn-Minkowski theory for bounded Borel sets: dual affine quermassintegrals and inequalities},
Adv.\ Math.\ {\bf 216} (2007), 358-386.
\bibitem{Grinberg-1990} {\rm E.\ L.\ Grinberg},  {\sl Isoperimetric
inequalities and identities for $k$-dimensional cross-sections of convex bodies}, Math. Ann. {\bf 291} (1991), 75-86.
\bibitem{Klartag-2006} \textrm{B.\ Klartag}, {\sl On convex perturbations with a bounded isotropic constant},
Geom.\ Funct.\ Anal.\ {\bf 16} (2006), 1274-1290.
\bibitem{Klartag-EMilman-2012} \textrm{B.\ Klartag and E.\ Milman},
{\sl Centroid Bodies and the Logarithmic Laplace Transform -- A Unified Approach},
J.\ Funct.\ Anal.\ \textbf{262} (2012), 10--34.
\bibitem{Koldobsky-book} {\rm A.\ Koldobsky}, {\sl Fourier analysis in convex geometry},
Mathematical Surveys and Monographs {\bf 116}, Amer. Math. Society (2005).
\bibitem{Koldobsky-DCG-2012} {\rm A.\ Koldobsky}, {\sl A hyperplane inequality for measures of
convex bodies in ${\mathbb R}^n$, $n\ls 4$}, Discrete Comput.\ Geom.\ {\bf 47} (2012), 538-547.
\bibitem{Koldobsky-Advances-2014} {\rm A.\ Koldobsky}, {\sl A $\sqrt{n}$ estimate for measures of hyperplane
sections of convex bodies}, Adv.\ Math.\ {\bf 254} (2014), 33-40.
\bibitem{Koldobsky-GAFA-2014} {\rm A.\ Koldobsky}, {\sl Estimates for measures of sections of convex bodies},
in Geometric Aspects of Functional Analysis, Lecture Notes in Mathematics {\bf 2116} (2014), 261-271.
\bibitem{Koldobsky-Advances-2015} {\rm A.\ Koldobsky}, {\sl Slicing inequalities for measures of convex bodies},
Adv.\ Math.\ {\bf 283} (2015), 473-488.
\bibitem{Koldobsky-AAM-2015} {\rm A.\ Koldobsky}, {\sl Isomorphic Busemann-Petty problem for sections of
proportional dimensions}, Adv. in Appl. Math. {\bf 71} (2015), 138-145.
\bibitem{Koldobsky-Lifshits-2000} {\rm A.\ Koldobsky and M.\ Lifshits}, {\sl Average volume
of sections of star bodies}, Geom. Aspects of Funct. Analysis, Lecture Notes in Math. {\bf 1745} (2000),
119-146.
\bibitem{Koldobsky-Ma-2013} {\rm A.\ Koldobsky and D.\ Ma}, {\sl Stability and slicing inequalities
for intersection bodies}, Geom.\ Dedicata {\bf 162} (2013), 325-335.
\bibitem{Koldobsky-Paouris-Zymonopoulou-2011} {\rm A.\ Koldobsky, G.\ Paouris and M.\ Zymonopoulou},
{\sl Isomorphic properties of intersection bodies}, J.\ Funct.\ Anal.\ {\bf 261} (2011), 2697-2716.
\bibitem{Koldobsky-Zvavitch-2015} {\rm A.\ Koldobsky and A.\ Zvavitch}, {\sl An isomorphic version of the Busemann-Petty
problem for arbitrary measures}, Geom.\ Dedicata {\bf 174} (2015), 261-277.
\bibitem{Lutwak-1984} {\rm E.\ Lutwak}, {\sl A general isepiphanic inequality}, Proc.\ Amer.\ Math.\ Soc.\ {\bf 90} (1984), 415-421.
\bibitem{Lutwak-1988} {\rm E.\ Lutwak}, {\sl Intersection bodies and dual mixed volumes}, Adv.\ Math.\ {\bf 71} (1988), 232-261.
\bibitem{Paouris-Pivovarov-2012} {\rm G.\ Paouris and P.\ Pivovarov}, {\sl A probabilistic take on isoperimetric-type inequalities},
Adv.\ Math.\ {\bf 230} (2012), 1402-1422.
\bibitem{Schneider-book} {\rm R. Schneider}, {\sl Convex Bodies: The Brunn-Minkowski
Theory}, Cambridge University Press, Cambridge, 1993.
\bibitem{Schneider-Weil-book} {\rm R.\ Schneider and W.\ Weil}, {\sl Stochastic and integral geometry}, Probability and
its Applications, Springer-Verlag, Berlin (2008).
\bibitem{Zhang-1996} {\rm G.\ Zhang}, {\sl Sections of convex bodies}, Amer.\ J.\ Math.\ {\bf 118} (1996), 319-340.
\bibitem{Zvavitch-2005} {\rm A.\ Zvavitch}, {\sl The Busemann-Petty problem for arbitrary measures}, Math.\ Ann.\ {\bf 331} (2005), 867-887.
\end{thebibliography}

\bigskip

\bigskip

\thanks{\noindent {\bf Keywords:}  Convex bodies, isotropic position, slicing problem, Busemann-Petty problem,
Blaschke-Petkantschin formula, random simplices, dual affine quermassintegrals.}

\smallskip

\thanks{\noindent {\bf 2010 MSC:} Primary 52A23; Secondary 46B06, 52A40, 60D05.}

\bigskip

\bigskip

\noindent \textsc{Giorgos \ Chasapis}: Department of
Mathematics, University of Athens, Panepistimioupolis 157-84,
Athens, Greece.

\smallskip

\noindent \textit{E-mail:} \texttt{gchasapis@math.uoa.gr}

\bigskip

\noindent \textsc{Apostolos \ Giannopoulos}: Department of
Mathematics, University of Athens, Panepistimioupolis 157-84,
Athens, Greece.

\smallskip

\noindent \textit{E-mail:} \texttt{apgiannop@math.uoa.gr}

\bigskip

\noindent \textsc{Dimitris-Marios \ Liakopoulos}: Department of
Mathematics, University of Athens, Panepistimioupolis 157-84,
Athens, Greece.

\smallskip

\noindent \textit{E-mail:} \texttt{dimliako1@gmail.com}

\bigskip

\end{document}